\newcommand{\R}{{\mathbb  R}}
\newcommand{\Z}{{\mathbb  Z}}
\newcommand{\N}{{\mathbb  N}}
\newcommand{\C}{{\mathbb  C}}
\newcommand{\ID}{{\mathbf{1}}}
\newcommand{\OID}{{\mathbf{I}}}
\newcommand{\fdot}{\,\cdot\,}
\def\Ddots{\mathinner{\mkern1mu\raise\p@
\vbox{\kern7\p@\hbox{.}}\mkern2mu
\raise4\p@\hbox{.}\mkern2mu\raise7\p@\hbox{.}\mkern1mu}}
\newcommand{\cH}{\mathcal{H}}
\newcommand{\f}{\varphi}
\newcommand{\e}{\varepsilon}
\newcommand{\p}{\mathbb{P}}
\DeclareMathOperator{\spa}{span}
\DeclareMathOperator{\clos}{clos}
\DeclareMathOperator{\Mod}{mod}
\DeclareMathOperator{\supp}{supp}
\DeclareMathOperator{\esupp}{ess-supp}
\DeclareMathOperator{\dist}{dist}
\newcommand{\ci}[1]{_{ {}_{\scriptstyle #1}}}
\newcommand{\ti}[1]{_{\scriptstyle \text{\rm #1}}}
\chardef\mathlig@atcode\count255
\def\actively#1#2{\begingroup\uccode`\~=`#2\relax\uppercase{\endgroup#1~}}
\def\mathlig@gobble{\afterassignment\mathlig@next@cmd\let\mathlig@next= }
\def\mathlig@delim{\mathlig@delim}
\def\mathlig@defcs#1{\expandafter\def\csname#1\endcsname}
\def\mathlig@let@cs#1#2{\expandafter\let\expandafter#1\csname#2\endcsname}
\def\mathlig@appendcs#1#2{\expandafter\edef\csname#1\endcsname{\csname#1\endcsname#2}}
\def\mathlig#1#2{\mathlig@checklig#1\mathlig@end\mathlig@defcs{mathlig@back@#1}{#2}\ignorespaces}
\def\mathlig@checklig#1#2\mathlig@end{%
 \expandafter\ifx\csname mathlig@forw@#1\endcsname\relax
 \expandafter\mathchardef\csname mathlig@back@#1\endcsname=\mathcode`#1%
 \mathcode`#1"8000\actively\def#1{\csname mathlig@look@#1\endcsname}%
 \mathlig@dolig#1\mathlig@delim
\fi
\mathlig@checksuffix#1#2\mathlig@end
}
\def\mathlig@checksuffix#1#2\mathlig@end{%
\ifx\mathlig@delim#2\mathlig@delim\relax\else\mathlig@checksuffix@{#1}#2\mathlig@end\fi
}
\def\mathlig@checksuffix@#1#2#3\mathlig@end{%
\expandafter\ifx\csname mathlig@forw@#1#2\endcsname\relax\mathlig@dosuffix{#1}{#2}\fi
\mathlig@checksuffix{#1#2}#3\mathlig@end
}
\def\mathlig@dosuffix#1#2{%
\mathlig@appendcs{mathlig@toks@#1}{#2}%
\mathlig@dolig{#1}{#2}\mathlig@delim
}
\def\mathlig@dolig#1#2\mathlig@delim{%
 \mathlig@defcs{mathlig@look@#1#2}{%
 \mathlig@let@cs\mathlig@next{mathlig@forw@#1#2}\futurelet\mathlig@next@tok\mathlig@next}%
 \mathlig@defcs{mathlig@forw@#1#2}{%
  \mathlig@let@cs\mathlig@next{mathlig@back@#1#2}%
  \mathlig@let@cs\checker{mathlig@chck@#1#2}%
  \mathlig@let@cs\mathligtoks{mathlig@toks@#1#2}%
  \expandafter\ifx\expandafter\mathlig@delim\mathligtoks\mathlig@delim\relax\else
  \expandafter\checker\mathligtoks\mathlig@delim\fi
  \mathlig@next
 }%
 \mathlig@defcs{mathlig@toks@#1#2}{}%
 \mathlig@defcs{mathlig@chck@#1#2}##1##2\mathlig@delim{%
  \ifx\mathlig@next@tok##1%
   \mathlig@let@cs\mathlig@next@cmd{mathlig@look@#1#2##1}\let\mathlig@next\mathlig@gobble
  \fi
  \ifx\mathlig@delim##2\mathlig@delim\relax\else
   \csname mathlig@chck@#1#2\endcsname##2\mathlig@delim
  \fi
 }%
%
 \ifx\mathlig@delim#2\mathlig@delim\else
  \mathlig@defcs{mathlig@back@#1#2}{\csname mathlig@back@#1\endcsname #2}%
 \fi
}%
\mathchardef\ordinarycolon\mathcode`\:
\def\vcentcolon{\mathrel{\mathop\ordinarycolon}}
\numberwithin{equation}{section}
\theoremstyle{plain}
\newtheorem{theo}{Theorem}[section]
\newtheorem{lem}[theo]{Lemma}
\newtheorem{prop}[theo]{Proposition}
\theoremstyle{definition}
\theoremstyle{rem}
\newtheorem*{ex*}{Example}
\theoremstyle{rem}
\newtheorem*{exs*}{Examples}
\theoremstyle{rem}
\newtheorem*{rems*}{Remarks}
\theoremstyle{rem}
\newtheorem*{rem*}{Remark}
\theoremstyle{rem}
\newtheorem{rem}{Remark}
\title[Rank one perturbations and Anderson-type Hamiltonians]{Rank one perturbations and Anderson-type Hamiltonians}
\author[C. Liaw]{Constanze Liaw,$^{1,*}$}
\address{$^{1}$Department of Mathematical Sciences, University of Delaware, 501 Ewing Hall, Newark, DE 19716, USA;
\newline
CASPER, Baylor University, One Bear Place \#97328,      
 Waco, TX  76798, USA.}
\email{\textcolor[rgb]{0.00,0.00,0.84}{liaw@udel.edu}}
\thanks{The work of C.~Liaw was supported by the National Science Foundation under the grant  DMS-1802682.}
\subjclass[2010]{Primary 47A55; Secondary 82B44, 81Q10.}
\keywords{rank one perturbations, Anderson-type Hamiltonian, Krein--Lifshits spectral shift, discrete random Schr\"odinger operator.}
\begin{document}

\maketitle

\begin{center}
\emph{To the memory of R.G.~Douglas. You were not only a vast source of knowledge. Words cannot fully express my appreciation for your steady advice, your unfaltering support and the many hours of mathematical discussions we shared.}
\end{center}

\begin{abstract}
Motivated by applications of the discrete random Schr\"odinger operator, mathematical physicists and analysts, began studying more general Anderson-type Hamiltonians; that is, the family of self-adjoint operators $$H_\omega = H + V_\omega$$ on a separable Hilbert space $\cH$, where the perturbation is given by $$V_\omega = \sum_n \omega_n (\fdot, \f_n)\f_n$$ with a sequence $\{\f_n\}\subset\cH$ and independent identically distributed random variables $\omega_n$.

We show that the the essential parts of Hamiltonians associated to any two realizations of the random variable are (almost surely) related by a rank one perturbation.
This result connects one of the least trackable perturbation problem (with almost surely non-compact perturbations) with one where the perturbation is `only' of rank one perturbations. The latter presents a basic application of model theory.

We also show that the intersection of the essential spectrum with open sets is almost surely either the empty set, or it has non-zero Lebesgue measure.
\end{abstract}

\section{\textbf{Introduction}}\label{s-rkfm}

In this spirit, let $H$ be a self-adjoint operator on a separable Hilbert space $\cH$. Let $\{\f_n\}\subset\cH$ be a sequence of linearly independent unit vectors in $\cH$, and let $\omega=(\omega_1, \omega_2, \hdots)$ consist of independent, identically distributed random variables $\omega_n$  corresponding to a probability measure on $\R$. Assume that the probability distribution satisfies Kolmogorov's 0-1 law (see Subsection \ref{ss-01} below).

Without going into details about the definition, the \emph{Anderson-type Hamiltonian} is an almost surely self-adjoint operator associated with
\begin{equation}\label{Model}
H_\omega = H + V_\omega \qquad\text{on }\cH, \qquad V_\omega = \sum\limits_n \omega_n (\fdot, \f_n)\f_n.
\end{equation}
In many applications the vectors $\f_n$ are mutually orthogonal. However, a priori, the definition allows the case of non-orthogonal vectors $\f_n$. And many of the properties that were originally proved for mutually orthogonal vectors immediately extend to this case.

Probably the most important special case of such Anderson-type Hamiltonians is the discrete Schr\"odinger operator with random potential on $l^2(\Z^d)$ given by
\begin{equation*}
Hf(x)=-\bigtriangleup f (x) = - \sum\limits_{|n|=1} (f(x+n)-f(x)), \quad \f_n(x)=\delta_n(x)=
\left\{\begin{array}{ll}1&x=n,\\ 0&\text{else,}\end{array}\right.
\end{equation*}
where each $\omega_n$ is distributed according to uniform distribution on the interval $[-c,c]$. That just means that each value in the interval occurs with equal probability.
Many Anderson models are special cases of an Anderson-type Hamiltonian.

From the perspective of classical perturbation theory \cite{katobook} the main difficulty is that the potential $V_\omega$ is almost surely a non-compact operator, implying that many results from classical perturbation theory cannot be applied here.

On the side we mention an important open problem concerning this perturbation family. The Anderson localization conjecture for weak disorder \cite{And1958, CFKS, KKL, L13, Banff} stands out as one that has been much attempted. The general question is whether or not an initially localized wave-packed will spread out over time, or remain localized in space as time moves on. Literature renders a variety of definitions what precisely \emph{localization} means. For example some definitions use the wave operator, while others formulate localization in terms of dynamical properties, or the persistence of a non-trivial absolutely continuous part (almost surely). The conjecture can be formulated with either of these definitions. For simplicity we choose the latter. To embed the conjecture we mention that for the discrete random Schr\"odinger operator in one dimension, $d=1$, operators $H_\omega$ are known to have trivial absolutely continuous parts (almost surely) whenever $c>0$. In higher dimensions, $d\ge 2$, there is a dimension dependent threshold $c_d$ above which the absolutely continuous parts vanish almost surely, and it is expected that for $d\ge3$ they prevail for small positive $c$. Now, it is conjectured that for $d=2$ the discrete random Schr\"odinger operator has vanishing absolutely continuous part (almost surely) whenever $c>0$; no matter how small.

In contrast to Anderson-type Hamiltonians stands the seemingly simple problem of perturbing a self-adjoint operator by an operator of rank one. Namely, for a self-adjoint operator $A$ on $\cH$ consider the \emph{family of self-adjoint rank one perturbations} by a vector $\f\in\cH$:
\[
A_\alpha = A+\alpha (\fdot, \f)\f,\qquad\alpha \in \R.
\]
(For details beyond this formal definition, see the discussion surrounding equation \eqref{e-rk1} below.) 

When the underlying Hilbert space $\cH$ is finite dimensional, we just need to keep track of the eigenvalues. However, for infinite dimensional $\cH$ intricate scenarios can occur that are closely connected with the boundary values of functions from model spaces.

In fact, the problem of rank one perturbations has connections to many interesting topics in analysis, such as model theory including deBranges--Rovnyak and Sz.-Nagy--Foia\c s model spaces \cite{cimaross, LT15, LTSurvey}, Nehari interpolation \cite{poltsara2006}, Carleson embeddings \cite{ross}, singular integral operators \cite{mypaper}, and truncated Toeplitz operators \cite{bessonov}.

With this in mind it becomes clear that, although rank one perturbations are the simplest from a perturbation theoretic perspective, their fine properties are extremely rich in nature. While Aronszajn--Donoghue theory captures much of the theory related to rank one perturbations, the picture is certainly not complete. For example, we do not know the singular continuous spectrum of the perturbed operator $A_\alpha$ in terms of properties of the unperturbed operator $A$, see e.g.~\cite{SIMREV}.

It was surprising when the Simon--Wolff criterion \cite{SIMWOL} on rank one perturbations was used to study localization properties of random Jacobi matrices \cite{SIMLOC}. These ideas were extended to Anderson-type Hamiltonians and refined \cite{ALP, JakLast2000, JakLast2006}. For example, it turns out that under mild conditions, \emph{any} non-zero vector is cyclic for the Anderson-type Hamiltonian almost surely.

In this manuscript, a new relationship between rank one perturbations and the essential parts of Anderson-type Hamiltonians is presented. In consideration of the great difference in the very nature of these two perturbation problems, this seems almost paradoxical. On the one hand this result restricts the spectral behavior of the Anderson-type Hamiltonians. On the other hand it shows the great complexity of the problem of rank one perturbations.

The proof at hand consists of constructing the spectral measures of the two operators. The Krein--Lifshits spectral shift function allows us to ensure that the hence constructed operators are indeed related by a rank one perturbations. These tools are based on similar observations made by A.~Poltoratski in \cite{alex2000}.

\subsection{Outline}
In Section \ref{s-prelims} we review related results from perturbation theory. We introduce and remind the reader of a few facts about the Krein--Lifshits spectral shift function for rank one perturbations, and we review on Kolmogorov's 0-1 law as well as its implications for Anderson-type Hamiltonians.

In Section \ref{s-firsttheo} we mention some simple known and some new results: In Subsection  \ref{subsection31}, we provide a short proof for two statements about the deterministic spectral structure of Anderson-type Hamiltonians.  In Subsection \ref{subsection3} we focus on the intersection of the essential spectrum with open sets and show that this intersection is almost surely either the empty set, or it has non-zero Lebesgue measure, see Theorem \ref{t-thirdtheo}.

In Section \ref{subsection3.2} we state and prove the main result (Theorem \ref{t-secondtheo}), which roughly says that the essential parts of $H_\omega$ and $H_\eta$ are almost surely with respect to the product measure $\p\times\p$ unitary equivalent modulo a rank one perturbation. 

\section{\textbf{Preliminaries}}\label{s-prelims}
\subsection{Perturbation Theory}
Perturbation theory is concerned with the general question:  Given some information about the spectrum of an operator $A$ what can be said about the spectrum of the operator $A+B$ for $B$ in some operator class? Depending on which class of operators the perturbation $B$ is taken from we obtain different results of spectral stability, i.e.~preservation of parts of the spectrum under such perturbations.

Since unitarily equivalent operators (i.e.~$UAU^{-1}=B$ for some unitary operator $U$) are of the same spectral type, we introduce the following notation.
We write $A\sim B$ for two operators $A$ and $B$ if the operators are unitary equivalent. The notation $$A\sim B (\Mod \text{Class }X)$$ is used if there exists a unitary operator $U$ such that $UAU^{-1}-B$ is an element of $\text{Class }X$. Here, $\text{Class }X$ can be any class of operators, e.g.~compact, trace class, or finite rank operators. 

For self-adjoint operators $A$ and $B$ let us recall the following well-known theorems that will be used in the proof of Theorem \ref{t-firsttheo} below.

\begin{theo}[Weyl--von Neumann, see e.g.~\cite{katobook}]\label{t-weylvn}
The essential spectra of two self-adjoint operators $A$ and $B$ satisfy $\sigma\ti{ess}(A)=\sigma\ti{ess}(B)$ if and only if $A\sim B (\Mod \text{compact operators})$.
\end{theo}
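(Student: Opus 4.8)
The plan is to prove the two implications separately: the ``if'' direction is a soft consequence of the stability of the essential spectrum, while the ``only if'' direction is the substantive part and rests on the Weyl--von Neumann diagonalization.

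\emph{Sufficiency.} Suppose $A \sim B (\Mod \text{compact operators})$, say $UAU^{-1} - B = K$ with $U$ unitary and $K$ compact. Unitary equivalence preserves the entire spectrum, so $\sigma\ti{ess}(A) = \sigma\ti{ess}(UAU^{-1})$. Since $B = UAU^{-1} - K$ is a compact perturbation of $UAU^{-1}$, I would invoke Weyl's theorem on the invariance of the essential spectrum under compact perturbations to conclude $\sigma\ti{ess}(UAU^{-1}) = \sigma\ti{ess}(B)$, and hence $\sigma\ti{ess}(A) = \sigma\ti{ess}(B)$. The cleanest way to justify the invariance step is the singular (Weyl) sequence criterion: $\lambda \in \sigma\ti{ess}(T)$ iff there is an orthonormal sequence $\{x_n\}$ with $\|(T-\lambda)x_n\| \to 0$; because a compact operator sends the weakly null sequence $\{x_n\}$ to a norm-null sequence, such a singular sequence transfers between $UAU^{-1}$ and $B$.

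\emph{Necessity, via diagonalization.} Assume now $\sigma\ti{ess}(A) = \sigma\ti{ess}(B) =: S$. The strategy is to put each operator into a diagonal normal form modulo a compact error and then match the two diagonals. First I would invoke the Weyl--von Neumann diagonalization theorem: every self-adjoint operator $T$ on a separable Hilbert space can be written as $T = D + K$, where $D$ is diagonal with respect to some orthonormal basis of $\cH$ and $K$ is compact (in fact Hilbert--Schmidt of arbitrarily small norm). I would prove this from the spectral theorem by partitioning $\sigma(T)$ into finitely many small Borel cells, selecting unit vectors from the ranges of the associated spectral projections, and building an orthonormal basis inductively so that in it $T$ is diagonal up to an off-diagonal remainder whose Hilbert--Schmidt norm is controlled by the mesh of the partition; refining the partition forces the remainder to a compact operator. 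Applying this to both $A$ and $B$, and recalling that compact perturbations leave $\sigma\ti{ess}$ unchanged, reduces everything to showing that two diagonal self-adjoint operators whose essential spectrum is the common set $S$ are unitarily equivalent modulo a compact operator.

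\emph{Matching the two diagonals.} Write $D_A = \operatorname{diag}(\mu_j)$ and $D_B = \operatorname{diag}(\nu_j)$. The key observation is that the essential spectrum of a diagonal operator $\operatorname{diag}(t_j)$ is exactly the set of limit points of the sequence $\{t_j\}$ (the values approached infinitely often, which includes eigenvalues of infinite multiplicity), while the finitely-often-occurring isolated entries constitute the discrete spectrum. Since both sequences $\{\mu_j\}$ and $\{\nu_j\}$ have the same limit set $S$, I would construct a bijection $\pi$ of the index set with $|\mu_{\pi(j)} - \nu_j| \to 0$: enumerate a countable dense subset of $S$ and greedily pair indices carrying nearby values, routing the isolated (discrete) entries of either diagonal, which can accumulate only on $S$, to nearby terms of the other sequence. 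The permutation $\pi$ induces a unitary $U$, and $U D_A U^{-1} - D_B = \operatorname{diag}(\mu_{\pi(j)} - \nu_j)$ has entries tending to $0$, hence is compact. Composing the unitaries and accumulating the compact errors from the diagonalization step then yields $A \sim B (\Mod \text{compact operators})$.

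\emph{Where the difficulty lies.} The genuine work sits in two places. The first is the Weyl--von Neumann diagonalization itself: the inductive construction of the basis with a controlled Hilbert--Schmidt remainder is the technical heart, although it may be cited from \cite{katobook}. The second, and the step I expect to be the main obstacle, is making the matching bijection fully rigorous, in particular the multiplicity bookkeeping that guarantees each limit point of $S$ remains available infinitely often in both sequences so that the greedy pairing never stalls, together with the treatment of the discrete spectrum and, in the unbounded case, of diagonal entries escaping to $\pm\infty$, where the two tails must be matched so that the difference of paired entries still converges to zero.
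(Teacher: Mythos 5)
The paper offers no proof of this statement at all---it is quoted as a classical result with a citation to \cite{katobook}---so there is no internal argument to compare against; your proposal must be judged on its own. On its own terms, it is the standard and correct textbook route: the singular-sequence (Weyl) criterion gives the ``if'' direction, and the ``only if'' direction via Weyl--von Neumann diagonalization followed by the diagonal-matching lemma (two diagonal self-adjoint operators whose entry sequences have the same set of infinitely-often limit points admit a permutation making the entrywise difference tend to zero) is exactly how the classical proof goes. Your handling of the discrete spectrum is also sound: finitely many badly paired entries do not obstruct compactness, and infinitely many discrete eigenvalues must accumulate on $\sigma\ti{ess}$, where the other sequence supplies infinitely many nearby entries, so the greedy pairing does not stall.

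The one genuine gap is precisely the point you flagged at the end but did not resolve: the tails escaping to $\pm\infty$. This is not a bookkeeping technicality---the matching step, and indeed the theorem as literally stated, \emph{fail} for unbounded self-adjoint operators. Take $A=\operatorname{diag}(1,2,3,\hdots)$ and $B=-A$: both have purely discrete spectrum, so $\sigma\ti{ess}(A)=\sigma\ti{ess}(B)=\varnothing$, yet $UAU^{-1}-B$ can never be compact, since $A\ge 1$ forces $UAU^{-1}$ to be bounded below while $B+K$ is unbounded below for any bounded $K$. So the necessity direction requires either that $A$ and $B$ be bounded, or the convention that the essential spectrum is taken in the extended real line so that the behavior at $\pm\infty$ is matched as well; under either amendment your bijection argument completes. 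The sufficiency direction, by contrast, is fine as you wrote it even for unbounded operators. It is worth noting that the paper applies the necessity direction to (typically unbounded) Anderson-type Hamiltonians, so this caveat attaches to the paper's statement itself, not merely to your proof.
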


Here, the essential part of the spectrum is obtained by removing the isolated eigenvalues of finite multiplicity from the spectrum.

\begin{theo}[Kato--Rosenblum, see e.g.~ \cite{katobook}]\label{t-KR}
If for two self-adjoint operators we have $A\sim B (\Mod \text{trace class})$ then their absolutely continuous parts are equivalent, i.e.~$A\ti{ac}\sim B\ti{ac}$.
\end{theo}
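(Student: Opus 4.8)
The plan is to obtain the unitary equivalence $A\ti{ac}\sim B\ti{ac}$ by constructing the wave operators of scattering theory, which is the classical route to this theorem. First I would put the hypothesis into a convenient normal form. If $C\sim D$ then clearly $C\ti{ac}\sim D\ti{ac}$, so the assumption $A\sim B\ (\Mod \text{trace class})$ lets me replace $A$ by the unitarily equivalent operator $UAU^{-1}$ (whose a.c.\ part is unitarily equivalent to that of $A$, so the final conclusion is unaffected) and thereby assume from the start that $A$ and $B$ act on the same space $\cH$ with $T:=B-A$ trace class and self-adjoint. It then suffices to produce a unitary operator from $\cH\ti{ac}(A):=\Ran P\ti{ac}(A)$ onto $\cH\ti{ac}(B)$ that intertwines $A\ti{ac}$ and $B\ti{ac}$.

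Second, I would introduce the wave operators
\[
W_\pm=\lim_{t\to\pm\infty}e^{itB}e^{-itA}P\ti{ac}(A)\qquad(\text{strong limit}),
\]
and base the argument on two properties: their existence and their completeness $\Ran W_\pm=\cH\ti{ac}(B)$. Granting existence, each $W_\pm$ is a partial isometry with initial space $\cH\ti{ac}(A)$ --- it is isometric there because $\|e^{itB}e^{-itA}f\|=\|f\|$ --- and the group law gives the intertwining relation $e^{isB}W_\pm=W_\pm e^{isA}$ for all real $s$, hence $BW_\pm=W_\pm A$ on the appropriate domain. Together with completeness this says that $W_+$ maps $\cH\ti{ac}(A)$ unitarily onto $\cH\ti{ac}(B)$ and conjugates $A\ti{ac}$ into $B\ti{ac}$, which is exactly the desired equivalence.

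Third, for completeness I would use that the trace-class condition is symmetric in $A$ and $B$: since $A=B+(-T)$ with $-T$ again trace class, the same existence statement with the roles of $A$ and $B$ interchanged yields wave operators $\widetilde W_\pm=\lim_{t\to\pm\infty}e^{itA}e^{-itB}P\ti{ac}(B)$. The chain-rule identities $\widetilde W_\pm W_\pm=P\ti{ac}(A)$ and $W_\pm\widetilde W_\pm=P\ti{ac}(B)$ then force $\Ran W_\pm=\cH\ti{ac}(B)$. Thus the whole theorem rests on a single assertion: the strong limits defining $W_\pm$ exist.

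The main obstacle is precisely this existence, and it is here that the trace-class hypothesis is indispensable. My approach would be Cook's method, exploiting the rank-one/Fourier structure of $T$. From $\frac{d}{dt}\big(e^{itB}e^{-itA}f\big)=i\,e^{itB}Te^{-itA}f$, the limit as $t\to+\infty$ exists once $\int_0^\infty\|Te^{-itA}f\|\,dt<\infty$ on a dense subset of $\cH\ti{ac}(A)$. Writing $T=\sum_n\lambda_n(\fdot,\psi_n)\psi_n$ with $\sum_n|\lambda_n|<\infty$ gives $\|Te^{-itA}f\|\le\sum_n|\lambda_n|\,\|\psi_n\|\,|(e^{-itA}f,\psi_n)|$, and each scalar $(e^{-itA}f,\psi_n)$ is the Fourier transform in $t$ of the absolutely continuous measure $d(E^A_\lambda f,\psi_n)$. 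Choosing $f$ from the dense set of vectors whose spectral densities are smooth and compactly supported makes every such integral finite, while the summability $\sum_n|\lambda_n|<\infty$ keeps the sum over $n$ under control; a uniform-boundedness and density argument then extends existence to all of $\cH\ti{ac}(A)$. Making this interchange of limit, sum, and integral rigorous --- and, above all, verifying that the required $L^1(dt)$-control of $(e^{-itA}f,\psi_n)$ genuinely holds on a dense set, the delicate point that breaks down for merely Hilbert--Schmidt perturbations --- is the technical heart of the argument.
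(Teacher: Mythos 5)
Your outer scaffolding is the classical one and, as far as it goes, correct: the paper itself states Kato--Rosenblum without proof, citing Kato's book, and the proof there follows the same architecture you set up --- reduce to $B=A+T$ with $T$ self-adjoint trace class on one space, construct the wave operators $W_\pm$, get the intertwining from the group law, deduce that $W_\pm$ are isometries on $\cH\ti{ac}(A)$, and obtain completeness by exploiting the symmetry of the hypothesis together with the chain rule. So the reduction, the partial-isometry and intertwining claims, and the completeness argument are all fine, and you correctly isolate existence of the strong limits as the entire content of the theorem.

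The gap is exactly where you place it, but it is fatal to the method you propose rather than a technicality to be ``made rigorous'': Cook's method does not reach the abstract trace-class theorem. In a spectral representation of $A\ti{ac}$, the scalar $(e^{-itA}f,\psi_n)$ is the Fourier transform in $t$ of the cross density $g_n(\lambda)=\langle f(\lambda),\psi_n(\lambda)\rangle$, and choosing $f$ with smooth, compactly supported spectral density regularizes only the factor $f(\lambda)$; the fiber function $\psi_n(\lambda)$ is an arbitrary $L^2$ object, so $g_n$ is merely in $L^1\cap L^2$ and its Fourier transform need not lie in $L^1(dt)$. The best that follows at this level of generality is Rosenblum's lemma: if the spectral density of $f$ is bounded by $M$, then $\int_\R |(e^{-itA}f,\psi_n)|^2\,dt\le 2\pi M\|\psi_n\|^2$ --- an $L^2(dt)$ bound, which Cook's method cannot use, already for a single rank-one term. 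Matters are worse for the full sum: even if for each fixed $\psi_n$ some dense subspace of good $f$'s existed, you would need one dense subspace working simultaneously for all $n$ with bounds summable against $\sum_n|\lambda_n|<\infty$, and countably many dense linear subspaces can intersect trivially; nothing in the trace-class hypothesis supplies such a common set. This is precisely why the classical proofs abandon the $L^1$ estimate: Kato and Rosenblum prove that $t\mapsto e^{itB}e^{-itA}f$ is Cauchy by expanding $\|(e^{itB}e^{-itA}f-e^{isB}e^{-isA}f)\|^2$ into a double integral and applying Cauchy--Schwarz to products of two $L^2(dt)$ factors furnished by Rosenblum's lemma on both the $A$- and $B$-sides, handling finite rank first and passing to trace class by a uniform limiting argument (Pearson's theorem is an alternative packaging of the same difficulty). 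To repair your proof, the ``technical heart'' must be replaced by one of these $L^2$-based mechanisms; as written, the assertion that nice spectral density of $f$ ``makes every such integral finite'' is unjustified and false in general.
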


We briefly explain how to recover the absolutely continuous part of an operator. First find a spectral measure $\mu$ (using the Spectral Theorem with respect to some minimal cyclic set of vectors) and take its Radon--Nikodym derivative $\frac{d\mu}{dx} = d\mu\ti{ac}$. The desired part of the operator is the one that corresponds to this absolutely continuous part of the measure. 

\begin{rem*}
For self-adjoint $A$ and $B$, Carey--Pincus \cite{CP} characterized when two operators are related by a rank one perturbation, that is, when we have $A\sim B~(\Mod \text{\em trace class})$. Of course, they must have unitarily equivalent absolutely continuous parts. Outside the continuous spectrum, they are only allowed discrete parts. And the discrete eigenvalues of $A$ and $B$ (counting multiplicity) must fall into three categories: (i) those eigenvalues of $A$ with distances from the joint continuous spectrum having finite $l^1$ norm (i.e.~are trace class), (ii) those eigenvalues of $B$ with distances from the joint continuous spectrum having finite $l^1$ norm, and (iii) eigenvalues of $A$ and $B$ that can be matched up (via a 1-1 and onto map) so that their differences have finite $l^1$ norm.
\end{rem*}

In the case of purely singular measures (i.e.~with trivial absolutely continuous part) the next theorem resembles a characterization for $A\sim B (\Mod \text{rank one})$.
Recall that two operators $A$ and $B$ are said to be completely non-equivalent, if there are no non-trivial closed invariant subspaces $\cH_1$ and $\cH_2$ of $\cH$ such that $A|\ci{\cH_1}\sim A|\ci{\cH_2}$. It is not hard to see that two operators are completely non-equivalent, if and only if their spectral measures are mutually singular. Here, we mean mutually singular in the sense of measure theory. That is, two measures $\mu$ and $\nu$ are said to be mutually singular, if there is a measurable set $B$ so that $\mu(B)=0$ and $\nu(\R\setminus B)=0$.

\begin{theo}[Poltoratski \cite{alex2000}]\label{t-Polt}
Let $K\subset\R$ be closed. By $I_1=(x_1;y_1), I_2=(x_2;y_2), \hdots$ denote disjoint open intervals such that $K=\R\backslash \bigcup I_n$. Let $A$ and $B$ be two cyclic self-adjoint completely non-equivalent operators with purely singular spectrum. Suppose $$\sigma(A)=\sigma(B)=K$$ and assume that for the pure point spectra (consisting of the eigenvalues) of $A$ and $B$ we have $$\sigma\ti{pp}(A)\cap\{x_1,y_1, x_2, y_2, \hdots\}=\sigma\ti{pp}(B)\cap\{x_1,y_1, x_2, y_2, \hdots\}=\varnothing.$$ Then we have $$A\sim B (\Mod \text{rank one}).$$
\end{theo}

The proof of our main result applies the latter theorem as well as Lemma \ref{l-dm} below which allows us to introduce absolutely continuous spectrum (while retaining precise control of the singular measures).

\subsection{Cauchy transform and rank one perturbations}
The deep connection between operator theory and the Cauchy transform
\begin{align*}
K\tau(z)=\frac{1}{\pi}\int_\R \frac{d\tau(t)}{t-z}\,,
\qquad
z\in \C_+,
\end{align*}
of an operator's spectral measure $\tau$ is well studied. This relationship is frequently used to learn about the spectral properties of the operator under investigation.
The connection between operator theory and the Cauchy transform and the spectral theory of rank one perturbations is particularly well developed \cite{cimaross, LT15, LTSurvey, mypaper, poltsara2006}. This connection is one of our major ingredients. Here we merely recall the results that are applied later in this article.

It is well-known that the density/weight function $w\in L^1$ of the absolutely continuous part of the measure can be recovered via
\begin{align}\label{e-acIm}
&d\tau\ti{ac}(x)= w dx =  \lim_{y\downarrow0}\Im\, K\tau (x+iy) \,dx,
\quad
x\in \R,
\end{align}
where $\Im$ denotes the imaginary part.

In Aleksandrov--Clark Theory, the following result plays an essential role.

\begin{theo}\label{t-polt}(Poltoratski \cite{NONTAN}, also see \cite{NPPOL}). Let $\tau$ and $\tilde\tau$ be two non-negative measures on the real line such that $\tilde \tau = f \tau +\tilde \tau\ti{s}$. Then
\[
\frac{K\tilde\tau}{K\tau}(x+i\e) \stackrel{\e\to0}{\longrightarrow} f(x)
\qquad \tau\ti{s}-\text{almost everywhere}.
\]
\end{theo}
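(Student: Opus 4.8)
The plan is to reduce the statement to a single core fact about the \emph{normalized} Cauchy transform: for every finite positive measure $\mu$ on $\R$ and every $g\in L^1(\mu)$,
\begin{equation*}
\frac{K(g\mu)}{K\mu}(x+i\e)\stackrel{\e\to0}{\longrightarrow}g(x)\qquad \mu\ti{s}\text{-a.e.}
\end{equation*}
Granting this, I would first use linearity of $K$ to write $K\tilde\tau=K(f\tau)+K\tilde\tau\ti{s}$, so that
\begin{equation*}
\frac{K\tilde\tau}{K\tau}=\frac{K(f\tau)}{K\tau}+\frac{K\tilde\tau\ti{s}}{K\tau}.
\end{equation*}
The first summand tends to $f$ $\tau\ti{s}$-a.e.\ by the core fact applied to $\mu=\tau$, $g=f$ (note $f\in L^1(\tau)$ since $f\tau$ is a finite measure). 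For the second summand I would set $\sigma=\tau+\tilde\tau\ti{s}$ and write $\tau=a\sigma$, $\tilde\tau\ti{s}=b\sigma$ for the Radon--Nikodym densities $a,b$; the hypothesis $\tilde\tau=f\tau+\tilde\tau\ti{s}$ means $\tilde\tau\ti{s}\perp\tau$, hence $\tilde\tau\ti{s}\perp\tau\ti{s}$, so $a=1$ and $b=0$ hold $\tau\ti{s}$-a.e. Applying the core fact to $\sigma$ (and using $\tau\ti{s}\ll\sigma\ti{s}$) gives $K\tau/K\sigma\to a=1$ and $K\tilde\tau\ti{s}/K\sigma\to b=0$ $\tau\ti{s}$-a.e., whence $K\tilde\tau\ti{s}/K\tau=(K\tilde\tau\ti{s}/K\sigma)/(K\tau/K\sigma)\to0$ $\tau\ti{s}$-a.e. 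Adding the two summands proves the theorem.

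It remains to prove the core fact, for which I would assemble two ingredients. The first is that $|K\mu(x+i\e)|\to\infty$ for $\mu\ti{s}$-a.e.\ $x$: by the standard differentiation theorem the symmetric derivative of $\mu$ with respect to Lebesgue measure equals $+\infty$ at $\mu\ti{s}$-a.e.\ point, and since
\begin{equation*}
\Im K\mu(x+i\e)=\frac{\e}{\pi}\int_\R\frac{d\mu(t)}{(t-x)^2+\e^2}
\end{equation*}
is a constant multiple of the Poisson integral of $\mu$, it blows up at exactly those points, giving $|K\mu|\ge\Im K\mu\to\infty$ there. The second ingredient is the algebraic identity, obtained by writing $g=g(x)+(g-g(x))$,
\begin{equation*}
\frac{K(g\mu)}{K\mu}(x+i\e)-g(x)=\frac{1}{K\mu(x+i\e)}\cdot\frac1\pi\int_\R\frac{g(t)-g(x)}{t-x-i\e}\,d\mu(t),
\end{equation*}
which shows that the core fact is equivalent to the estimate that, at $\mu\ti{s}$-a.e.\ $x$, the integral on the right is $o\big(|K\mu(x+i\e)|\big)$ as $\e\to0$.

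The main obstacle is precisely this last estimate. One cannot argue naively, because the numerator is itself (essentially) a Cauchy transform and need not be bounded; dividing a possibly large numerator by a blowing-up denominator requires a genuinely quantitative comparison. I would attempt it at a point $x$ that is simultaneously a point of infinite symmetric derivative and a $\mu$-Lebesgue point of $g$, splitting the integral into the ranges $|t-x|<\delta$ and $|t-x|\ge\delta$: the far range is $O\big(\delta^{-1}\|g\|_{L^1(\mu)}\big)$, while the near range is controlled via $\int_{|t-x|<\delta}|g(t)-g(x)|\,d\mu(t)=o\big(\mu([x-\delta,x+\delta])\big)$, to be compared against the lower bound $|K\mu(x+i\e)|\gtrsim\e^{-1}\mu([x-\e,x+\e])$. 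Making these scales match uniformly is exactly the hard part: closing the argument requires a weak-type (maximal function) bound for the normalized Cauchy transform, or equivalently the Aleksandrov--Clark description of $V_\mu\colon L^2(\mu)\to K_\theta$ as a unitary onto the model space together with the pseudocontinuation of functions in $K_\theta$. This harmonic-analytic input, rather than the algebraic reduction above, is where the real work lies.
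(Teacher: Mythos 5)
Your algebraic reduction is fine as far as it goes: splitting $K\tilde\tau=K(f\tau)+K\tilde\tau\ti{s}$, passing to $\sigma=\tau+\tilde\tau\ti{s}$, and using the Radon--Nikodym densities $a,b$ with $a=1$, $b=0$ holding $\tau\ti{s}$-a.e.\ is correct (and your reading of $\tilde\tau\ti{s}$ as the $\tau$-singular part of $\tilde\tau$ is the intended one, since otherwise the statement fails). But this reduction is nearly contentless, because your ``core fact'' $K(g\mu)/K\mu\to g$ $\mu\ti{s}$-a.e.\ \emph{is} the theorem in the special case $\tilde\tau\ll\tau$, and the general case follows from it in a few lines, exactly as you show. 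The entire burden therefore sits on the core fact, and you do not prove it --- your final paragraph concedes as much. Nor can the sketched real-variable argument be closed as stated: the far range $|t-x|\ge\delta$ contributes $O(\delta^{-1})$ against the lower bound $|K\mu(x+i\e)|\gtrsim\e^{-1}\mu([x-\e,x+\e])$, which forces $\delta=\delta(\e)\to0$, while the near range yields only $o\bigl(\mu([x-\delta,x+\delta])/\mu([x-\e,x+\e])\bigr)$, which is uncontrolled pointwise unless $\delta\approx\e$; the two constraints are incompatible without a quantitative maximal-function input. The obstruction is structural, not technical laziness: the Cauchy kernel is not positive, so Lebesgue-point information cannot suffice --- indeed the conclusion is generally \emph{false} $\tau\ti{ac}$-a.e.\ (where $K\tau$ has finite boundary values and the nonlocal real part contaminates the ratio), so any proof must exploit the blow-up on the singular part in an essentially quantitative way, via the weak-type $(1,1)$ bound for the normalized maximal Cauchy transform with respect to $\mu\ti{s}$, or equivalently via the Clark/model-space unitarity and pseudocontinuation you mention, or via the rank-one spectral-averaging route of \cite{NPPOL}. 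Naming that input is not supplying it; as submitted, the proposal has a genuine gap precisely at the theorem's analytic core.

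For calibration: the paper itself does not prove this statement --- it is imported as background, attributed to Poltoratski \cite{NONTAN} with an alternative proof in \cite{NPPOL} --- so there is no internal argument to compare against. A self-contained proof would have to reproduce the machinery of one of those references; your reduction would serve as a reasonable opening page of such a proof, but no more.
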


Here we always work with measures that satisfy Poisson integrability $\int\frac{d\tau(t)}{t^2+1}<\infty$. Especially when dealing with rank one perturbations, we do often encounter measures with $\int\frac{d\tau(t)}{|t|+1}=\infty$. In order to avoid difficulties with convergence it is standard to introduce an alternative definition of the Cauchy transform
\begin{align*}
K_1\tau(z)=\frac{1}{\pi}\int_\R \left(\frac{1}{t-z}-\frac{t}{t^2+1}\right)\,d\tau(t),
\qquad
z\in \C_+.
\end{align*}
We use both $K\tau$ and $K_1\tau$ below.
Notice that the two behave alike locally, as the integrand $-\frac{t}{t^2+1}$ is uniformly bounded on $\R$. 
Although it will not play a role later on it is worth mentioning that (for $\tau$ such that $K\tau$ is defined on $\C_+$) the real part of $K_1\tau$ differs from the conjugate Poisson integral by a finite additive constant. 

The advantage of introducing this alternative definition is that it is possible to define $K_1\tau$ for more general measures $\tau$.  Indeed, since $\frac{1}{t-z}-\frac{t}{t^2+1}$ behaves like $t^{-2}$ as $t\to\infty$, we can work with Poisson integrable measures $\tau$ and do not need to assume the stronger condition $\int\frac{d\tau(t)}{|t|+1}<\infty$.

Let $A$ be a self-adjoint (possibly unbounded operator) on a Hilbert space $\cH$. Let $\f$ be such that the corresponding rank one perturbation will be form bounded, i.e.~$\|(1+|A|)^{-1/2}\f\|\ci{\cH}<\infty$; see \cite{mypaper} and its references for more information. Then we can use quadratic forms to define the family of rank one perturbations via the formal expression
\begin{align}\label{e-rk1}
A_\alpha&=A+\alpha (\cdot,\f)\f,
\qquad \alpha\in \R.
\end{align}
Only focussing on the interesting part of the perturbation problem, we assume that $\f$ is a cyclic vector for $A$, i.e.$$
\cH = \overline{\spa\{(A-z\OID)^{-1} \f: z\in \C\backslash \R}\}.$$ To see that we are not restricting generality, notice that on the orthogonal complement of the invariant subspace $\overline{\spa\{(A-z\OID)^{-1} \f: z\in \C\backslash \R}\}$ for $A$ and $A_\alpha$ in $\cH$, operator $A_\alpha$ is independent of $\alpha$.

In our setting, it is well-known that $\f$ is also a cyclic vector of the operator $A_\alpha$ for all $\alpha\in \R$. By $\mu_\alpha$ denote the spectral measure of $A_\alpha$ with respect to $\f$. In other words, invoking the Spectral Theorem, $\mu_\alpha$ is given by $$((A_\alpha-z\OID)^{-1}\f,\f)\ci{\cH}=\int_\R \frac{d\mu_\alpha(t)}{t-z}\qquad\text{for all }z\in \C\backslash\R.$$
We use the notation $\mu=\mu_0$.

With the resolvent formula, it is not difficult to see that the Cauchy transforms of the measures $\mu$ and $\mu_\alpha$ of the rank one perturbation \eqref{e-rk1} are related via the Aronszajn--Krein formula
\begin{align}\label{e-AK}
K\mu_\alpha&=\frac{K\mu}{1+\pi\alpha K\mu}\,,
\end{align}
also see \cite[Equation (11.13)]{SIMREV}.

Aronszajn--Donoghue theory (see e.g.~\cite[Section 12.2]{SIMREV}) provides a good picture of the spectrum of the perturbed operator for rank one perturbations. One of its  intriguing results says that the singular part of rank one perturbations must move when we change the perturbation parameter $\alpha$:

\begin{theo}[Aronszajn--Donoghue]\label{t-AD}
For coupling constants $\alpha\neq\beta\in \R$, the singular parts of the corresponding spectral measures $\mu_\alpha$ and $\mu_\beta$ are mutually singular, i.e.~$(\mu_\alpha)\ti{s}\perp(\mu_\beta)\ti{s}$.
\end{theo}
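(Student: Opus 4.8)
The plan is to concentrate the singular part of each $\mu_\alpha$ on an explicit set determined by the boundary values of $K\mu$, and then to observe that these sets are pairwise disjoint for distinct coupling constants. The engine is the boundary behaviour of the Cauchy transform: since $\Im K\tau(x+iy)=\frac1\pi\int_\R \frac{y\,d\tau(t)}{(t-x)^2+y^2}$ is (a multiple of) the Poisson integral of the positive measure $\tau$, standard boundary theory gives $\Im K\tau(x+iy)\to+\infty$ as $y\downarrow0$ at $\tau\ti{s}$-almost every point, whereas by \eqref{e-acIm} the limit is finite $dx$-a.e.\ on the absolutely continuous support. Consequently $\tau\ti{s}$ is carried by the blow-up set $\{x:\ |K\tau(x+iy)|\to\infty \text{ as } y\downarrow0\}$.

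First I would apply this to $\tau=\mu_\alpha$ and rewrite the blow-up set by means of the Aronszajn--Krein formula \eqref{e-AK}. For $\alpha\neq0$, if $|K\mu_\alpha(x+iy)|\to\infty$ then, since $K\mu_\alpha=K\mu/(1+\pi\alpha K\mu)$, the denominator must tend to $0$ while the numerator does not: were $K\mu$ to stay bounded with denominator bounded away from $0$ the quotient would remain bounded, and were $|K\mu|\to\infty$ the quotient would tend to the finite value $1/(\pi\alpha)$. Hence $|K\mu_\alpha(x+iy)|\to\infty$ forces $K\mu(x+iy)\to-\tfrac{1}{\pi\alpha}$. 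I would therefore set
\[
S_\alpha=\Big\{x\in\R:\ \lim_{y\downarrow0}K\mu(x+iy)=-\tfrac{1}{\pi\alpha}\Big\}\quad(\alpha\neq0),
\qquad
S_0=\Big\{x\in\R:\ \lim_{y\downarrow0}|K\mu(x+iy)|=\infty\Big\},
\]
and conclude from the previous paragraph that $(\mu_\alpha)\ti{s}$ is concentrated on $S_\alpha$ for every $\alpha\in\R$ (the case $\alpha=0$ being $\mu_0=\mu$).

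The conclusion is then immediate. A function can have at most one limit as $y\downarrow0$ at a fixed point, so for $\alpha\neq\beta$ the sets $S_\alpha$ and $S_\beta$ are disjoint: the prescribed limits $-\tfrac1{\pi\alpha}$ and $-\tfrac1{\pi\beta}$ are distinct finite numbers when both coupling constants are nonzero, and the value $\infty$ defining $S_0$ is incompatible with any finite value. Since $(\mu_\alpha)\ti{s}$ lives on $S_\alpha$ and $(\mu_\beta)\ti{s}$ lives on the disjoint set $S_\beta$, the two singular parts are mutually singular.

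The step carrying the real content, and the one I expect to be the main obstacle, is the boundary fact invoked in the first paragraph: that the singular part of a positive measure is carried exactly by the set where its Cauchy (equivalently Poisson) transform blows up. This is where one must appeal to the boundary theory of Cauchy integrals --- for instance a symmetric-derivative / de la Vall\'ee Poussin decomposition, or Poltoratski's nontangential limit theorem (Theorem \ref{t-polt}) --- rather than to anything specific to rank one perturbations. Once it is in hand, the Aronszajn--Krein identity reduces the statement to the trivial observation that distinct boundary values cannot coincide.
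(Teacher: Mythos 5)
Your proof is correct and follows exactly the classical Aronszajn--Donoghue argument that the paper invokes without proof: your carrier-set statement for $(\mu_\alpha)\ti{s}$ is precisely Theorem \ref{t-AS} (with the normalization $-1/(\pi\alpha)$ consistent with \eqref{e-AK}), and mutual singularity then follows from disjointness of the sets of prescribed boundary values. The only presentational nicety is that your two-case dichotomy for the denominator is most cleanly made airtight by viewing $w\mapsto w/(1+\pi\alpha w)$ as a homeomorphism of the Riemann sphere sending $-1/(\pi\alpha)$ to $\infty$, which handles possible oscillation of $K\mu(x+iy)$ along different sequences $y\downarrow 0$.
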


This result was proved by Aronszajn for Sturm--Liouville operators with varying boundary conditions \cite{Aronszajn} and by Donoghue in the abstract setting of rank one perturbations \cite{Donoghue}.

Another result within this theory gives a necessary condition for a point to be in the essential support of the singular spectrum of $A_\alpha$. The theorem in this form can easily be extracted from Theorem 6 of \cite{Donoghue}, which states that the set $\{x:\lim_{y\downarrow0}K\mu(x+iy)= -\alpha^{-1}\}$ is a carrier for $(\mu_\alpha)\ti{s}$ (meaning that $(\mu_\alpha)\ti{s}$ is trivial outside that set).

\begin{theo}\label{t-AS}
We have $(\mu_\alpha)\ti{s}(\{x:\lim_{y\downarrow0}K\mu(x+iy)\neq -\alpha^{-1}\})=0$.
\end{theo}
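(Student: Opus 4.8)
The plan is to read off the singular part of $\mu_\alpha$ from the boundary behaviour of its Poisson integral and then transport that information back to $\mu$ through the Aronszajn--Krein formula \eqref{e-AK}. With the normalization adopted here, $\Im K\tau(x+iy)$ is precisely the Poisson integral of $\tau$ at $x+iy$, so the first step is the classical carrier description: the singular part of any positive (Poisson-summable) measure $\tau$ is concentrated on the set $E_\tau=\{x\in\R:\lim_{y\downarrow0}\Im K\tau(x+iy)=+\infty\}$. Applying this to $\mu_\alpha$ reduces the claim to showing that at every $x\in E_{\mu_\alpha}$ one has $\lim_{y\downarrow0}K\mu(x+iy)=-\tfrac1{\pi\alpha}$, the value at which the denominator of \eqref{e-AK} vanishes (this is the threshold singled out in the statement).

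The second step is purely algebraic. Solving \eqref{e-AK} for $K\mu$ gives $K\mu=\frac{K\mu_\alpha}{1-\pi\alpha K\mu_\alpha}$ throughout $\C_+$. Fix $x\in E_{\mu_\alpha}$. Since $|K\mu_\alpha(x+iy)|\ge\Im K\mu_\alpha(x+iy)\to+\infty$ as $y\downarrow0$, dividing numerator and denominator by $K\mu_\alpha(x+iy)$ shows $K\mu(x+iy)\to-\tfrac1{\pi\alpha}$. Hence $E_{\mu_\alpha}\subseteq\{x:\lim_{y\downarrow0}K\mu(x+iy)=-\tfrac1{\pi\alpha}\}$, and since $(\mu_\alpha)\ti{s}$ lives on $E_{\mu_\alpha}$ it assigns no mass to the complementary set, which is the assertion. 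A pleasant feature is that the \emph{existence} of the boundary limit of $K\mu$ on $E_{\mu_\alpha}$ is a free by-product of the blow-up of $K\mu_\alpha$ and the algebraic relation, so no separate boundary-value theorem for $K\mu$ is required; the case $\alpha=0$ is just the unperturbed statement that $\mu\ti{s}$ sits where $K\mu\to\infty$.

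The genuine analytic content, and the step I expect to be the main obstacle, is the carrier description used in the first paragraph, i.e.~that $(\mu_\alpha)\ti{s}$ is supported on $E_{\mu_\alpha}$. This is a de la Vall\'ee Poussin type result: by the differentiation theory of measures the symmetric derivative $\lim_{\e\to0}\mu_\alpha((x-\e,x+\e))/2\e$ equals $+\infty$ for $(\mu_\alpha)\ti{s}$-almost every $x$, and at such points the Poisson integral $\Im K\mu_\alpha(x+iy)$ necessarily diverges to $+\infty$ as $y\downarrow0$. I would either cite this directly---it is exactly what underlies Theorem II.2 of \cite{SIMREV}, from which the present theorem is said to follow---or reprove it via the standard comparison between the symmetric derivative and the Poisson integral of a positive measure. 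Once this carrier statement is in place, only the short computation of the second paragraph remains.
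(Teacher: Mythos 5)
Your proof is correct, and it is essentially the argument the paper itself points to: the paper offers no proof of Theorem \ref{t-AS}, remarking only that it ``can easily be extracted from Theorem II.2 of \cite{SIMREV}'', and the extraction is precisely your two steps. The carrier statement in your first paragraph is the genuine content, as you say, and your sketch of it is sound: since $\f$ is a unit vector, $\mu_\alpha$ is a finite (hence Poisson-summable) measure, the comparison $\Im K\mu_\alpha(x+iy)\ge \mu_\alpha\bigl((x-y,x+y)\bigr)/(2\pi y)$ together with the infinite symmetric derivative of a Lebesgue-singular measure at $(\mu_\alpha)\ti{s}$-a.e.\ point gives the full limit $\Im K\mu_\alpha(x+iy)\to+\infty$, not merely a limsup. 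The algebraic step is also clean: the inverted formula $K\mu=K\mu_\alpha/(1-\pi\alpha K\mu_\alpha)$ is legitimate on all of $\C_+$ because $\Im\bigl(1-\pi\alpha K\mu_\alpha\bigr)=-\pi\alpha\,\Im K\mu_\alpha\neq 0$ there, and your observation that the existence of the boundary limit of $K\mu$ on $E_{\mu_\alpha}$ comes for free from the blow-up of $K\mu_\alpha$ is exactly right --- no Fatou-type theorem for $K\mu$ is needed.

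One point you passed over silently deserves flagging: your computation yields the threshold $-(\pi\alpha)^{-1}$, which is where the denominator of \eqref{e-AK} vanishes under the paper's normalization $K\tau(z)=\frac1\pi\int\frac{d\tau(t)}{t-z}$, whereas the statement as printed says $-\alpha^{-1}$. The two agree only if one reads the $K\mu$ in the statement as the unnormalized Borel transform $\pi K\mu$ (Simon's $F$, for which the Aronszajn--Krein formula reads $F_\alpha=F/(1+\alpha F)$ and the Aronszajn--Donoghue condition is $F(x+i0)=-\alpha^{-1}$). As written, the paper's statement carries a spurious factor of $\pi$ relative to its own conventions, and your value is the internally consistent one; you should say so explicitly rather than identifying the two constants without comment, but this is a normalization discrepancy in the source, not a gap in your argument.
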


\subsection{Essential support of the absolutely continuous part of a  measure}
In order to define one of the objects of interest, we isolate the limit supremum from the symmetric definition of the Radon-Nikodym derivative.

In this spirit, we let $\tau$ be a Borel measure on $\R$. Fix $\e>0$ and consider the Borel function $x\mapsto D_\e\tau(x)$ where
\[
D_\e\tau(x):=
\frac{\tau([x-\e, x+\e])}{2\e}.
\]
Note that the denominator equals the Lebesgue measure of  interval $[x-\e, x+\e]$.

The essential support of the absolutely continuous part of a Borel measure $\tau$ (on $\R$) is  given by
\begin{align}\label{d-esupp}
\esupp\tau\ti{ac}=\left\{x\in\R:0<\limsup\limits_{\e\to0} D_\e\tau(x)<\infty\right\}.
\end{align}

\begin{rem}\label{ess-suppA}
In order to embed this into classical theory, we mention that the Radon-Nikodym derivative of $\tau$ exists at $x$, if and only if $$\limsup\limits_{\e\to0} D_\e\tau(x) = \liminf\limits_{\e\to0} D_\e\tau(x)<\infty.$$
\end{rem}

\begin{rem}\label{ess-supp}
Since the Radon--Nikodym derivative exists almost everywhere (with respect to Lebesgue measure) two operators satisfy $A\ti{ac}\sim B\ti{ac}$ if and only if the essential supports of the absolutely continuous parts of their spectral measures are equal up to a set of measure zero.
Indeed, as described in \cite[Section 12.1]{SIMREV}, two absolutely continuous measures $f(x)dx$ and $g(x)dx$ are equivalent if and only of the symmetric difference of the sets $\{x\mid f(x)\neq 0\}$ and $\{x\mid g(x)\neq 0\}$ has Lebesgue measure zero. And the operators that act as multiplication by the independent variable $M_x$ on $L^2(f(x)dx)$ and $L^2(g(x)dx)$ are unitarily equivalent if and only if the measures $f(x)dx$ and $g(x)dx$ are equivalent. It remains to apply Remark \ref{ess-suppA}.
\end{rem}

\begin{rem}
The same arguments as in Remark \ref{ess-supp} also imply that the essential support of the absolutely continuous part of an operator's spectral measure is up to a set of measure zero independent of the choice of cyclic vector (used in the spectral theorem).
\end{rem}

It is worth presenting a simple example to demonstrate that $\esupp\tau\ti{ac}\subsetneq\supp\tau\ti{ac}$ may happen:
\begin{ex*}
Let $\tau$ be the measure given by the sum of Lebesgue measures on intervals that have all rational points of $[0,3]$ as centers and with width $2^{-n+1}$. Namely, with an enumeration $\{q_n\}$ of these rational points, let
\[
d\tau(x)=
\sum_{n\in \N}
\chi\ci{[q_n-2^{-n}, q_n+2^{-n}]}(x)dx.
\]
The sum of the interval width $\sum_{n\in\N} 2^{-n+1} = 2$, so that the Lebesgue measure of the essential support satisfies the crude estimate $|\esupp\ti{ac}\tau|\le2$. On the other hand, the rationals are dense in $[0,3]$ and so $3\le|\supp\tau\ti{ac}|$. In fact, as $0$ and $3$ are centers of some intervals, we have $3<|\supp\tau\ti{ac}|$. In any case, we have $\esupp\tau\ti{ac}\subsetneq\supp\tau\ti{ac}$.
\end{ex*}

\subsection{Krein--Lifshits Spectral Shift for Rank One Perturbations}\label{ss-KL}
In this section, we briefly present the Krein--Lifshits spectral shift function and its properties for rank one perturbations. More detailed explanations, examples and proofs can be found in \cite{alex1998} and the references therein.

Consider the rank one perturbations $A_\alpha$ given by \eqref{e-rk1} and their spectral measures $\mu_\alpha$ corresponding to the cyclic vector $\f$.

Since the spectral measure $\mu$ is non-negative, the Cauchy transform $K\mu(z)$ is Herglotz, i.e.~its imaginary part is non-negative for $z\in\C_+$. 
For every $\alpha \in \R$ it is hence possible to find an essentially bounded by $-\pi<u(t)\le \pi$, $t\in \R$, function and a constant $c\in \R$ such that
\begin{align}\label{e-nuu}
&1+ \pi \alpha K\mu = e^{K_1u+c}\,.
\end{align}
see e.g.~\cite[Section VIII.1]{MP}.
To better understand this formula, recall that the angular boundary values of the Cauchy transform exist almost everywhere with respect to the Lebesgue measure. Now think of $K_1 u$ as the analytic upper half-plane extension of $u$. So that for $\alpha>0$ (we can always re-label $A$ and $A_\alpha$ so that $\alpha>0$), function $u$ can equivalently be defined via the principal argument
\begin{align}\label{e-arc}
u=\text{arg} (1+\pi\alpha K\mu).
\end{align}
Function $u$ is called the Krein--Lifshits spectral shift of the rank one perturbation $A_\alpha$.
Since $K\mu$ is Herglotz, the range of $u$ is contained in $[0,\pi].$
Indeed, consider the logarithm of \eqref{e-nuu}, take its imaginary part and recall the relation \eqref{e-acIm}.

By breaking $K\mu$ in \eqref{e-arc} into real and imaginary part $K\mu =iP\mu-Q\mu$ (where $P$ denotes the Poisson integral and $Q$ denotes the conjugate Poisson integral), it becomes clear that the singularity of the integrand causes $u$ to jump from $0$ to $\pi$ at isolated points of $\supp\mu\ti{s}$.

In the non-isolated case, a characterization of the point masses of $\mu$ and $\mu_\alpha$ is included in \cite[Section VIII.5]{MP}.

Using the Aronszajn--Krein formula \eqref{e-AK} we obtain a relation between the shift function and the measure $\mu_\alpha$:
\begin{align*}
&1- \pi \alpha K\mu_\alpha= e^{-K_1u-c}\,.
\end{align*}
And the analog
\begin{align}\label{e-arg}
u=-\text{arg} (1-\pi\alpha K\mu_\alpha)
\end{align}
of \eqref{e-arc} for $\mu_\alpha$ implies that $u$ drops from $\pi$ to $0$ at isolated points of $\supp(\mu_\alpha)\ti{s}$.

So in essence, each family of spectral measures $\{\mu_\alpha\}\ci{\alpha\in \R}$ corresponds to some Krein--Lifshits spectral shift function $u$.

Further the set where $u\in (0, \pi)$ and not equal to one of the endpoints of the inverval is equal (up to a set of Lebesgue measure zero) to $\esupp(\mu)\ti{ac}$. In particular, it follows that
\[
\esupp(\mu)\ti{ac}=\esupp(\mu_\alpha)\ti{ac}.
\]

\begin{rem}
These observations about the relationship between the spectrum of $A$ and $A_\alpha$, and the behavior of $u$ give an alternative proof for the fact that the discrete spectrum of two purely singular operators in the same family of rank one perturbations must be interlacing. In absence of absolutely continuous spectrum, $u$ can only take on the values $0$ and $\pi$, so that the Krein--Lifshits spectral shift essentially jumps from $0$ to $\pi$ and then back.
\end{rem}

Vice versa, it is well-known that for fixed $\alpha>0$ any measurable function $u$ which is essentially bounded by $0\le u\le \pi$ is the Krein--Lifshits spectral shift of the rank one perturbation $M_\mu+\alpha (\cdot,\ID)\ID$ of the multiplication operator $M_\mu$ by the independent variable on $L^2(\mu)$. In fact, given such a function $u$ and $\alpha>0$ we obtain a unique pair of measures $\mu$ and $\nu=\mu_\alpha$ if we impose a normalization condition on the measures. For $\alpha =1$ we say that the measures $\mu$ and $\nu$ correspond to $u$.

\subsection{Kolmogorov's 0-1 law and Anderson-type Hamiltonians}\label{ss-01}
Consider triples $(\Omega, \mathcal{A}, \p)$ of probability spaces, where $\Omega = \R^\infty$ consists of countably many copies of $\R$ and where $\p$ is a countable product of equal probability measures. We let $\omega=(\omega_1, \omega_2, \hdots)\in\Omega$ be taken in accordance with $\p$. 

Here we consider only those probability measures $\p$ that satisfy Kolmogorov's 0-1 law. Namely, properties that are invariant under changing finitely many of the $\omega_n$ are enjoyed with probability 0 or 1.
This is particularly useful here, because perturbation theory tells us that many properties are independent under finite rank perturbations.

Specifically, we use:
\begin{prop}[Kolmogorov's 0-1 law applied to Anderson-type Hamiltonians]\label{obs}
Consider the Anderson-type Hamiltonian $H_\omega$ given by \eqref{Model}. Assume that the probability distribution $\p$ satisfies the 0-1 law. Then those spectral properties that are invariant under finite rank perturbations are enjoyed by $H_\omega$ almost surely or almost never.
\end{prop}

\section{\textbf{Deterministic spectral structure}}\label{s-firsttheo}
\subsection{Deterministic absolutely continuous part and essential spectrum}\label{subsection31}
\begin{theo}\label{t-firsttheo}
Let $H_\omega$ be given by \eqref{Model}. Assume the hypotheses of Section \ref{s-rkfm} and assume that $\p$ satisfies the Kolmogorov 0-1 law. Then almost surely with respect to the product measure $\p\times\p:$
\begin{itemize}
\item[1)] $(H_\omega)\ti{ac}\sim(H_\eta)\ti{ac}$ and
\item[2)] $H_\omega\sim H_\eta(\Mod \text{compact operator})$.
\end{itemize}
\end{theo}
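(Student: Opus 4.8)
The plan is to combine the classical perturbation theorems of Section~\ref{s-prelims} with Kolmogorov's 0-1 law in the form of Observation~\ref{obs}. The essential point is that although $V_\omega$ is in general non-compact, altering only finitely many coupling constants is harmless: if $\omega$ and $\omega'$ agree outside a finite set $F$ of indices, then
\[
H_\omega-H_{\omega'}=\sum_{n\in F}(\omega_n-\omega_n')(\fdot,\f_n)\f_n
\]
is a bounded operator of finite rank (each $\f_n$ is a unit vector), hence trace class. Both assertions thus concern quantities that are invariant under finite rank perturbations, which is exactly the hypothesis needed to invoke the 0-1 law.

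For part 2) I would use that the essential spectrum is unchanged under compact, and in particular finite rank, perturbations, so the displayed identity gives $\sigma\ti{ess}(H_\omega)=\sigma\ti{ess}(H_{\omega'})$ whenever $\omega,\omega'$ differ in finitely many coordinates. To read this as a 0-1 statement I would encode the closed set $\sigma\ti{ess}(H_\omega)$ through the countable family of events ``$\sigma\ti{ess}(H_\omega)\cap(a,b)\neq\varnothing$'' indexed by rational intervals $(a,b)$. Each is invariant under finite rank perturbations, so Observation~\ref{obs} assigns it probability $0$ or $1$; intersecting the associated full-measure sets produces a single event of full $\p$-measure on which $\sigma\ti{ess}(H_\omega)$ meets exactly the same rational intervals, hence (being closed) equals one deterministic closed set $\Sigma$. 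Therefore $\sigma\ti{ess}(H_\omega)=\Sigma=\sigma\ti{ess}(H_\eta)$ for $\p\times\p$-a.e.\ $(\omega,\eta)$, and the Weyl--von Neumann theorem (Theorem~\ref{t-weylvn}) yields $H_\omega\sim H_\eta\,(\Mod\text{compact})$.

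For part 1) I would run the same scheme with the Kato--Rosenblum theorem (Theorem~\ref{t-KR}) in place of Weyl--von Neumann. Since the finite-rank difference is trace class, Theorem~\ref{t-KR} gives $(H_\omega)\ti{ac}\sim(H_{\omega'})\ti{ac}$ whenever $\omega,\omega'$ differ in finitely many coordinates. The unitary equivalence class of the absolutely continuous part is pinned down by its essential support together with the multiplicity function $N_\omega(\cdot)$ of $(H_\omega)\ti{ac}$, and both data are now finite-rank invariant. Encoding them by the countable families of events ``the absolutely continuous spectrum of $H_\omega$ has positive Lebesgue measure in $(a,b)$'' and ``$|\{x\in(a,b):N_\omega(x)\ge k\}|>0$'', with $(a,b)$ rational and $k\ge1$ an integer, the 0-1 law again singles out a deterministic absolutely continuous part $A^0$ with $(H_\omega)\ti{ac}\sim A^0$ for $\p$-a.e.\ $\omega$. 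Passing to the product measure then gives $(H_\omega)\ti{ac}\sim A^0\sim(H_\eta)\ti{ac}$ for $\p\times\p$-a.e.\ $(\omega,\eta)$.

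The perturbation theorems are quotable black boxes here, so the work all sits in the 0-1 law step. The delicate part I would write out in full is the reduction of the set-valued object $\sigma\ti{ess}(H_\omega)$, and of the function-valued multiplicity, to countably many scalar $0/1$ events, so that Kolmogorov's law applies and the countably many almost-sure conclusions can be intersected into one full-measure statement. For part 1) one must moreover keep the multiplicity function and not merely the essential support, since equality of essential supports by itself does not force unitary equivalence of the absolutely continuous parts outside the cyclic (multiplicity-one) case.
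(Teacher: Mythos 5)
Your overall strategy is exactly the paper's: a change of finitely many coordinates of $\omega$ alters $H_\omega$ by a finite rank (hence trace class and compact) operator, so by Kato--Rosenblum and Weyl--von Neumann the relevant spectral data are invariant under such changes, and Kolmogorov's 0-1 law (Observation \ref{obs}) makes them deterministic. Your part 2) is complete: a closed set is determined by the countable family of answers to ``does it meet the rational interval $(a,b)$'', so intersecting the corresponding full-measure events does produce one deterministic closed set $\Sigma$ with $\sigma\ti{ess}(H_\omega)=\Sigma$ almost surely, and Theorem \ref{t-weylvn} finishes. You are also right --- and more explicit than the paper, which buries this point in its ``maximal fiber'' normalization and in remark (c) following Theorem \ref{t-KR} --- that for part 1) one must control the multiplicity function $N_\omega$ of $(H_\omega)\ti{ac}$ and not merely the essential support.

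However, in part 1) your encoding is too coarse: the qualitative events ``$|\{x\in(a,b):N_\omega(x)\ge k\}|>0$'' do not determine the sets $\{N_\omega\ge k\}$ up to Lebesgue-null sets, so intersecting the associated full-measure events does not single out one equivalence class $A^0$. Concretely, $E_1=[0,1]$ and $E_2=[0,1]\backslash C$, with $C$ a fat Cantor set, give identical answers to every such event (the complement of $C$ is open and dense, hence of positive measure in every subinterval), yet $|E_1\bigtriangleup E_2|=|C|>0$; two parameters $\omega,\eta$ surviving all your events could still have $\{N_\omega\ge k\}$ and $\{N_\eta\ge k\}$ differing on a set of positive measure, in which case $(H_\omega)\ti{ac}\sim(H_\eta)\ti{ac}$ fails. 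The repair stays entirely within your scheme: use the quantitative events ``$|\{x\in(a,b):N_\omega(x)\ge k\}|>q$'' for rational $q\ge 0$. These are still invariant under finite coordinate changes by Theorem \ref{t-KR}, the 0-1 law together with monotonicity in $q$ yields a deterministic value of $|\{N_\omega\ge k\}\cap(a,b)|$ for each rational interval, and since two locally finite Borel measures agreeing on all rational intervals coincide, each set $\{N_\omega\ge k\}$ is deterministic modulo null sets; this pins down the a.c.\ part up to unitary equivalence. (The paper sidesteps the issue by a different bookkeeping: it fixes a point $x$, applies the 0-1 law to the event $x\in\esupp(\mu_\omega)\ti{ac}$ fiberwise, and then integrates over $x$, implicitly via Fubini.)
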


While the statement in item 1) is known (see \cite[Corollary 1.3]{JakLast2000}), we present a short proof for the convenience of the reader and since the proof structure also underlies the proof of the statement in item 2).

\begin{proof}
The words `almost surely' (`almost never') in this proof refer to almost surely (almost never) with respect to the product measure $\p\times\p$.

Let $H_{\widetilde\omega}$ denote finite rank perturbations of $H$, i.e.~$\widetilde \omega = (\widetilde \omega_1, \widetilde \omega_2, \hdots)$ with $\widetilde \omega_n\neq 0$ only for finitely many $n$.
In particular, $ H_{\widetilde\omega}$ are compact and trace class perturbations of $H$.

To show the statement in item 1). Without loss of generality, let $\mu_\omega$ denote the `fiber' of the spectral measure of $H_\omega$ for which $\esupp\mu_\omega$ is maximal with respect to the inclusion of sets. (Alternatively, one can think of $\mu_\omega$ as the associated scalar-valued spectral measure. This can also be obtained by taking the trace of a matrix-valued spectral measure.) Let $\mu_{\widetilde\omega}$ be the analog measure for $ H_{\widetilde\omega}$.

By the Kato--Rosenblum theorem (see Theorem \ref{t-KR}) and Remarks \ref{ess-suppA} and \ref{ess-supp}, for almost every $x\in \R$ we have $x\in\esupp (\mu_{(0,0,0,\hdots)})\ti{ac}$ if and only if $x\in\esupp (\mu_{\widetilde\omega})\ti{ac}$. By virtue of the Kolmogorov 0-1 law (see Proposition \ref{obs}),  for almost every $x\in \R$ we have $x\in \esupp(\mu_\omega)\ti{ac}$ almost surely or almost never. The set (up to a set of measure zero) of points $x$ for which the latter is almost surely true is hence deterministic and the statement in item 1) is proven.

Item 2) follows in analogy via the Weyl--von Neumann theorem (see Theorem \ref{t-weylvn}) replacing Theorem \ref{t-KR}.
\end{proof}

\begin{rem}
(a) In fact, we have proved the stronger -- than item 1) of Theorem \ref{t-firsttheo} -- statement that the essential support of the absolutely continuous spectrum is a deterministic set (up to a set of Lebesgue measure zero). Namely, for some measurable set $A\subset \R$ we have that the symmetric difference $$A\bigtriangleup \esupp(\mu_\omega)\ti{ac}$$ has Lebesgue measure zero $\p$ almost surely $\omega$.\\
(b) Similarly for item 2) of Theorem \ref{t-firsttheo}, it follows that there exists a deterministic set $K$ such that $K=\sigma\ti{ess}(H_\omega)$ almost surely.\\
(c) Although the perturbation $V_\omega$ is almost surely (with respect to $\p$) a non-compact perturbation, there is still a deterministic set $K = \sigma\ti{ess}(H_\omega)$ for $\p$ almost all $\omega$.
\end{rem}

\subsection{Intersection of the essential spectrum with open sets}\label{subsection3}
Assume the setting of Theorem \ref{t-firsttheo}.
Recall that $\sigma\ti{ess}(H_\omega)$ is a deterministic set, by item 2) of Theorem \ref{t-firsttheo}.

\begin{theo}\label{t-thirdtheo}
Assume the hypotheses of Theorem \ref{t-firsttheo} and assume that $\p$ is a product of absolutely continuous measures. Let $O$ be an open set and let $X=O\cap \sigma\ti{ess}(H_\omega)$. Then almost surely $$\text{either }X =\varnothing,\text{ or the Lebesgue measure }|X|>0.$$
\end{theo}

\begin{proof}
Assume $|X|=0$ and $X\neq \varnothing$. Take $x\in X$.

Since $O$ is open, there exists $\e>0$ such that the interval $(x-\e,x+\e)\subset O$. Consider $X_\e = X\cap (x-\e,x+\e)$. Clearly we have $|X_\e|=0$.

Recall item 1) of Theorem \ref{t-firsttheo}. This implies that almost surely
\[
(\mu_\omega)\ti{ac}((x-\e, x+\e))=(\mu_\omega)\ti{ac}(X_\e)=0.
\]

In virtue of Lemma \ref{l-NoSAS} below $(\mu_\omega)\ti{s}(X_\e)=0$ almost surely.

Therefore $x\notin \sigma\ti{ess}(H_\omega)$ almost surely, in contradiction to the fact that $x\in X$. Hence almost surely either $X=\varnothing$ or $|X|>0$.
\end{proof}

\begin{lem}\label{l-NoSAS}
Assume the hypotheses of Theorem \ref{t-firsttheo} and assume that $\p$ is a product of absolutely continuous measures $\mu_k$. If set $A\subset \R$ satisfies $|A|=0$, then we have $(\mu_\omega)\ti{s}(A)=0$ almost surely.
\end{lem}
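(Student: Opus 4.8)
The plan is to reduce the statement, by means of the spectral averaging formula for rank one perturbations, to a one-parameter computation in each coordinate, and then to assemble the coordinates using Fubini's theorem together with a countable union of null events.

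First I would note that, since $|A|=0$, the absolutely continuous part of any spectral measure assigns no mass to $A$; hence it suffices to prove that the full spectral measure satisfies $\mu_\omega(A)=0$ almost surely, which a fortiori yields $(\mu_\omega)\ti{s}(A)=0$. For each $n$ let $\nu_\omega^{(n)}$ denote the spectral measure of $H_\omega$ with respect to the impurity vector $\f_n$, that is $\nu_\omega^{(n)}(\fdot)=(E_\omega(\fdot)\f_n,\f_n)$, where $E_\omega$ is the spectral resolution of $H_\omega$. Since the $\{\f_n\}$ form a cyclic family for $H_\omega$ (so that $\mu_\omega$ is equivalent to $\sum_n 2^{-n}\nu_\omega^{(n)}$), we have $\mu_\omega(A)=0$ precisely when $\nu_\omega^{(n)}(A)=0$ for every $n$. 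It therefore suffices to prove, for each fixed $n$, that $\nu_\omega^{(n)}(A)=0$ almost surely; the conclusion then follows by taking the union of the countably many exceptional null sets.

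Next I would fix $n$ and condition on the remaining coordinates $\omega'=(\omega_m)_{m\neq n}$. Writing $B=H+\sum_{m\neq n}\omega_m(\fdot,\f_m)\f_m$, the family $\{H_\omega:\omega_n\in\R\}$ is exactly the rank one perturbation family $B+\omega_n(\fdot,\f_n)\f_n$ in the direction $\f_n$, and $\nu_\omega^{(n)}$ is its spectral measure with respect to $\f_n$. The spectral averaging formula for rank one perturbations (see \cite{SIMREV}) then gives
\[
\int_\R \nu_{(\omega_n,\omega')}^{(n)}(A)\,d\omega_n = |A| = 0 .
\]
As the integrand is nonnegative, $\nu_\omega^{(n)}(A)=0$ for Lebesgue-almost every $\omega_n$, for every fixed $\omega'$. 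Because the distribution $\mu_n$ of $\omega_n$ is absolutely continuous, the Lebesgue-null exceptional set of $\omega_n$ is also $\mu_n$-null; integrating over $\omega'$ against $\prod_{m\neq n}\mu_m$ and invoking Fubini's theorem yields $\nu_\omega^{(n)}(A)=0$ for $\p$-almost every $\omega$, as required.

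The hard part, I expect, will be the spectral averaging step rather than the bookkeeping. One must confirm that the formula applies to the spectral measure with respect to $\f_n$ even though $\f_n$ need not be cyclic for the conditioned operator $B$ --- it does, since both the perturbation and the measurement live in the cyclic subspace generated by $\f_n$ --- and one must check the joint measurability of $(\omega,x)\mapsto\nu_\omega^{(n)}$ required for Fubini. It is worth stressing that the mutual singularity statements of Aronszajn--Donoghue (Theorems \ref{t-AD} and \ref{t-AS}) do not by themselves suffice: they describe where the singular part of each $\nu_\omega^{(n)}$ can live, but give no quantitative control over how much of this movable singular spectrum the fixed null set $A$ can capture as $\omega_n$ ranges over a positive-measure set. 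It is precisely this quantitative control that spectral averaging supplies.
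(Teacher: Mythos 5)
Your proof is correct, but it takes a genuinely different route from the paper's. The paper argues by contradiction in four lines: if $(\mu_\omega)\ti{s}(A)>0$ with positive probability then, using the product structure to freeze all but the $k$-th coordinate, one finds a set $\mathcal{X}$ with $\mu_k(\mathcal{X})>0$ such that $(\mu_{\omega_\alpha})\ti{s}(A)>0$ for every $\alpha\in\mathcal{X}$, where $\omega_\alpha=\omega_0+\alpha\delta_k$, and this is asserted to contradict the Aronszajn--Donoghue theorem (Theorem \ref{t-AD}). You instead run the standard Simon--Wolff style argument: condition on $\omega'=(\omega_m)_{m\neq n}$, apply the spectral averaging identity $\int\nu^{(n)}_{(\omega_n,\omega')}(A)\,d\omega_n=|A|=0$ to the rank one family $B+\omega_n(\fdot,\f_n)\f_n$, use absolute continuity of $\mu_n$ to transfer the Lebesgue-null exceptional set, and take a countable union over $n$. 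The trade-off is real: the paper's proof is shorter, but its final step is exactly the point your last paragraph attacks, and your criticism is well taken --- mutual singularity of $(\mu_{\omega_\alpha})\ti{s}$ for distinct $\alpha$ does not by itself prevent uncountably many of these measures from all charging a fixed Lebesgue-null set $A$ (mutually singular point masses sitting at distinct points of an uncountable null set do exactly that), so Theorem \ref{t-AD} alone does not deliver the claimed contradiction. The quantitative control that closes this hole is precisely the spectral averaging you invoke, so your route is not merely an alternative but arguably the more complete argument, at the modest cost of the bookkeeping (joint measurability, self-adjointness of the conditioned operator $B$ for almost every $\omega'$, and the observation that averaging applies to $(E_\omega(\fdot)\f_n,\f_n)$ even when $\f_n$ is not cyclic for $B$) that you correctly anticipate and correctly resolve.

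One caveat on your write-up: the reduction of $\mu_\omega(A)=0$ to $\nu^{(n)}_\omega(A)=0$ for all $n$ assumes that $\{\f_n\}$ is a cyclic family for $H_\omega$, which is not among the stated hypotheses of Theorem \ref{t-firsttheo}. Some such assumption is in fact needed for the lemma as stated --- an eigenvalue of $H$ located in $A$ whose eigenvector lies in a reducing subspace orthogonal to everything generated by the $\f_n$ survives every $V_\omega$ --- and the paper's own proof tacitly needs it as well, since a rank one perturbation in the direction $\f_k$ cannot move spectrum outside the cyclic subspace generated by $\f_k$. So either state the cyclicity explicitly or interpret $\mu_\omega$ as the maximal spectral measure of $H_\omega$ restricted to the reducing subspace generated by the $\f_n$; with that proviso your argument is complete.
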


\begin{proof}
Recall that $\p$ is a product of absolutely continuous measures $\mu_k$.

Assume that $(\mu_\omega)\ti{s}(A)>0$ with positive probability. Then (for arbitrary $k\in \N$) there exist $\omega_0$ and $\mathcal{X}\subset \R$ such that $\mu_k(\mathcal X)>0$ and such that for all $\alpha\in \mathcal X$ we have $(\mu_{\omega_\alpha})\ti{s}(A)>0$ where $\omega_\alpha = \omega_0+\alpha \delta_k$.

But this contradicts the Aronszajn--Donoghue Theorem \ref{t-AD} for rank one perturbations. Notice that $\mathcal X$ contains at least two points, since all $\mu_k$ are absolutely continuous.
\end{proof}

\section{\textbf{Almost sure unitary equivalence modulo a rank one perturbation}}\label{subsection3.2}
The main result of this paper, see Theorem \ref{t-secondtheo} below, says that the essential parts of two Anderson-type Hamiltonians are unitarily equivalent modulo a rank one perturbation. Its proof relies on constructing an appropriate Krein--Lifshits spectral shift function.

By $\partial S$ we denote the boundary of a given  set $S$, and by $|\fdot|$ denote the Lebesgue measure.

\begin{theo}\label{t-secondtheo}
Assume the hypotheses of Theorem \ref{t-firsttheo}. Assume that $(H_\omega)\ti{ess}$ is cyclic almost surely (with respect to  $\p$) and $\p=\Pi_k \mu_k$ is a product measure of purely absolutely continuous measures $\mu_\omega$ on $\R$. Let $\mu$ denote the spectral measure of the operator $(H_\omega)\ti{ess}$ with respect to some cyclic vector. If $|\partial \esupp (\mu_\omega)\ti{ac} |=0$ almost surely, then $$(H_\omega)\ti{ess}\sim (H_\eta)\ti{ess}(\Mod \text{rank one})$$ almost surely with respect to the product measure $\p\times\p$.
\end{theo}

On the one hand, this result greatly restricts the possible deterministic properties of Anderson-type Hamiltonians. On the other hand, it tells us how `wild' rank one perturbations can be.

Recall that the essential spectrum comes about from removing from the spectrum all isolated point masses that have finite multipilicity. Further recall that the absolutely continuous and singular parts of the spectrum arise from Lebesgue decomposition of its spectral measure, $\mu = \mu\ti{ac}+\mu\ti{s}$. A particular decomposition of the operator is then obtained through unitary equivalence with the particular decomposition of the spectral representation. (That is, on the spectral representation side, the $L^2(\mu)$ space is orthogonally decomposed in accordance with the particular spectral decomposition, the multiplication operator is restricted to these invariant subspaces, and the decomposition of the operator is carried over via unitary equivalence.)

\begin{rem}
(a) If a family of Anderson-type Hamiltonians possesses a weak Anderson localization property (namely, if there is no absolutely continuous spectrum almost surely), then the hypotheses of cyclicity and $|\partial \esupp (\mu_\omega)\ti{ac} |=0$ hold automatically. Indeed, the restricted operator $(H_\omega)\ti{s}$ is cyclic almost surely by Theorem 1.2 of \cite{JakLast2006}, and also recall that the operators $(H_\omega)\ti{ac}$ and $(H_\omega)\ti{s}$ are completely non-equivalent because the essential supports of their spectral measures are mutually singular. Similarly, almost sure cyclicity of $(H_\omega)\ti{ac}$ implies the almost sure cyclicity of $(H_\omega)\ti{ess}$.\\
(b) In the conclusion of this result it is necessary to restrict to the essential parts of the operators. The statement $H_\omega\sim H_\eta(\Mod \text{rank one})$ is not true, since the finite isolated point spectra of $H_\omega$ and $H_\eta$ might not interlace. This intertwining is one of the necessary conditions for two operators to be unitarily equivalent up to rank one perturbation. In fact, between two points in the discrete spectrum of $H_\omega$ there may be any number of points from the discrete spectrum of $H_\eta$ (almost surely).\\
(c) Theorem \ref{t-secondtheo} cannot be concluded trivially by using Theorem \ref{t-Polt}, plus item 1) of Theorem \ref{t-firsttheo} and then separating the singular from the absolutely continuous part. This can be seen by counterexample: Embedded singular spectrum can occur for one operator, but not for the other (with positive probability). In particular, the absolutely continuous spectrum of $(H_\omega)\ti{ess}$ may have dense embedded singular spectrum, and $(H_\eta)\ti{ess}$ has purely absolutely continuous spectrum. In this case, the singular parts of $(H_\omega)\ti{ess}$ and $(H_\eta)\ti{ess}$ are not unitarily equivalent up to rank one perturbations (as they would have to interlace).\\
(d) We expect that relaxing the hypotheses of the theorem from $(H_\omega)\ti{ess}$ is cyclic to assuming that it has finite multiplicity $m$ would yield the conclusion $(H_\omega)\ti{ess}\sim (H_\eta)\ti{ess}(\Mod \text{rank }m)$.
\end{rem}

The proof of Theorem \ref{t-secondtheo} uses Poltoratski's result on a characterization of rank one perturbations in terms of the spectrum (Theorem \ref{t-Polt}) as well as the following lemma which will allow us to introduce absolutely continuous spectrum while retaining precise control of the singular measures.

\begin{lem}\label{l-dm}
Let $u$ be a Krein--Lifshits spectral shift function with range in the set $\{0,\pi\}$. Let $\mu$ and $\nu$ be the corresponding spectral measures. Take an open set $O\subset \R$ such that $|O|<\infty$. For $c>0$ define a new shift function by
\[
\tilde u (x)=  
\left\{
\begin{array}{ll}
 u(x) &\text{on }\R\backslash O\\
 \left|u(x)-\min\{\dist(\R\backslash O, x),\pi/2\}\right|,& \text{if } x\in O.
\end{array}
\right.
\]

For the measures $\tilde \mu$ and $\tilde \nu$ that correspond to $\tilde u$, we have the equivalence of measures $\tilde \mu|\ci{\R\backslash O} \sim \mu|\ci{\R\backslash O}$ and $\tilde\nu|\ci{\R\backslash O}  \sim \nu|\ci{\R\backslash O}$.
\end{lem}

\begin{proof}
For $t\in \R\backslash O$ we have
\[
 |K_1(u-\tilde u)(t)|
 \le
 \int_O
 \left|
 \frac{u(x)-\tilde u(x)}{t-x}
 \right|
 dx
\le
\int_O \frac{\dist(\R\backslash O, x)}{|t-x|} dx
\le
|O|,
\]
and with \eqref{e-nuu}, it follows that
\[
0<c<\frac{1+\pi  K\tilde\mu}{1+\pi  K\mu} < C<\infty \qquad
 \mu|\ci{\R\backslash O}-\text{almost everywhere}.
\]
(Since $\tilde \mu$ and $\tilde \nu$ correspond to $\tilde u$, we have by convention $\alpha = 1$.)

By definition $\mu|\ci{\R\backslash O}$ and $\tilde\mu|\ci{\R\backslash O}$ are purely singular. Therefore, we have
\begin{align}\label{e-cC}
 0<\tilde c<\frac{K\tilde\mu}{K\mu} < \tilde C<\infty \qquad
 \mu|\ci{\R\backslash O}-\text{almost everywhere}.
\end{align}

If (on $\R\backslash O$) measure $\mu$ has a part that is singular with respect to $\tilde\mu$ (denote it by $\eta$), then the ratio of Cauchy integrals $ \frac{K \tilde\mu}{K\mu}$ tends to zero with respect to $\eta$ almost everywhere. This contradicts the lower bound of the last estimate \eqref{e-cC}.
Hence $\mu|\ci{\R\backslash O}$ must be absolutely continuous with respect to $\tilde\mu|\ci{\R\backslash O}$.

The other direction -- that $\tilde\mu|\ci{\R\backslash O}$ is absolutely continuous with respect to $\mu|\ci{\R\backslash O}$ -- follows in analogy and we have proven $$\tilde\mu|\ci{\R\backslash O} \sim \mu|\ci{\R\backslash O}.$$

The result for $\nu$ can be proven in analogy. 
\end{proof}

\begin{proof}[Proof of Theorem \ref{t-secondtheo}.] Most of this proof is to be understood almost surely with respect to the product measure $\p\times\p$, although this might not be stated everywhere explicitly.

By $\mu$ denote the spectral measure of the operator $(H_\omega)\ti{ess}$ with respect to some cyclic vector and similarly for $\nu$ and $(H_\eta)\ti{ess}$, where $(\omega,\eta)$ is distributed according to $\p\times\p$. It is worth mentioning that the spectral measures of an operator corresponding to any two cyclic vectors are equivalent.

In virtue of Lemma \ref{l-MSAS} (below) we have that $\mu\ti s\perp \nu\ti s$ almost surely with respect to product measure.

The goal is to produce a spectral shift function with corresponding spectral measures that are equivalent to the spectral measures $\mu$ and $\nu$, respectively. This is done by construction of auxiliary measures $\mu_1$ and $\nu_1$ that behave like $\mu$ and $\nu$ on the singular parts. And in a second step we modify these auxiliary measures to obtain the desired absolutely continuous parts. In the end, we verify that we did not destroy the good singular behavior that the auxiliary measures had.

By item 1) of Theorem \ref{t-firsttheo}, the symmetric difference $$\esupp\mu\ti{ac}\bigtriangleup\esupp\nu\ti{ac}$$ is a set of measure zero (almost surely with respect to the product measure). Let us denote the intersection of these sets by $F=\esupp\mu\ti{ac}\cap\,\esupp\nu\ti{ac}$.
Notice that by the hypothesis, without loss of generality, we can assume $|\partial \esupp\mu\ti{ac}|=|\partial \esupp\nu\ti{ac}|=0$. A simple set theoretic argument shows that $|\partial F|=0$.

Further, by item 2) of Theorem \ref{t-firsttheo} and the Weyl--von Neumann theorem, Theorem \ref{t-weylvn}, their essential spectra satisfy $\sigma\ti{ess}(H_\omega)=\supp\mu=\supp\nu$. Let us denote this set by
\[
E = \sigma\ti{ess}(H_\omega).
\]

First observe that, by definition of $E$, operators $(H_\omega)\ti{ess}$ and $(H_\eta)\ti{ess}$ have dense purely singular spectrum on the set $E\backslash \clos (F)$.
By the definition of $F$ and since $|\partial F|=0$, it is possible to choose two purely singular measures $\mu'$ and $\nu'$ such that:
\begin{itemize}
\item $\mu'$ and $\nu'$ are mutually singular ($\mu'\perp\nu'$),
\item $\mu'|\ci{\R\backslash (F\backslash \partial F)}=\nu'|\ci{\R\backslash (F\backslash \partial F)}=0$, and so that
\item $\mu_1=\mu\ti{s}+\mu'$ and $\nu_1=\nu\ti{s}+\nu'$ have dense (alternating) spectrum on $E$.
\end{itemize}

The rough idea is that $\mu_1|\ci{\R\backslash (F\backslash \partial F)}$ and $\nu_1|\ci{\R\backslash (F\backslash \partial F)}$ are essentially what we are looking for. Further, $\mu_1$ and $\nu_1$ are spectral measures of operators that are rank one perturbations of one another. We still need to modify these measures on $F\backslash \partial F$, in order to ensure that the constructed measures are equivalent to $\mu$ and $\nu$ also on $F$.

By Theorem \ref{t-Polt}, the measures $\mu_1$ and $\nu_1$ possess a spectral shift function $u_1$, i.e.~there exists a function $u_1$ which is essentially bounded by $0\le u_1 \le\pi$ and such that
\[
u_1=\arg(1+\pi K\mu_1)=-\arg(1-\pi K\nu_1).
\]
Note that the hypothesis that there are no point masses at the endpoints is satisfied almost surely. So we can assume this condition without loss of generality.

In order to destroy the artificially created singular spectrum and introduce the appropriate absolutely continuous spectrum, we define
\begin{equation*}
u_2(x)
=
\left\{
\begin{array}{ll}
u_1(x), & \text{if }x \in \R\backslash (F\backslash \partial F),\\
|u_1(x) - \min \{\dist(\R\backslash (F\backslash \partial F) , x), \pi/2 \}|, & \text{if }x\in F\backslash \partial F,
\end{array}
\right.
\end{equation*}
and let $\mu_2$ and $\nu_2$ be the measures corresponding to $u_2$.

It remains to prove that $\mu_2\sim \mu$ and $\nu_2\sim \nu$. We will explain the equivalence of $\mu_2$ and $\mu$. The same fact for $\nu$ follows in analogy.

Let us begin with the absolutely continuous parts.
Recall  that $|\partial F|=0$. So on the set $F$ we have $u_2\in(0,\pi)$ Lebesgue almost everywhere. By equations \eqref{e-arc}, \eqref{e-arg} and \eqref{e-acIm}, it follows that $\frac{d\mu_2}{dx}(x)>0$ and $<\infty$ for Lebesgue almost all $x\in F$. This means that $$(\mu_2)\ti{ac}|\ci F\sim (\mu)\ti{ac}|\ci F.$$

The equivalence of the absolutely continuous part on $\R\backslash F$ follows similarly from the fact that $u_2$ takes only the values $0$ or $\pi$ on $\R\backslash F$.

We have shown that $(\mu_2)\ti{ac}\sim \mu\ti{ac}$. And by the same reasoning we have $(\nu_2)\ti{ac}\sim \nu\ti{ac}$.

Now we need to ensure that this construction lead to the desired singular parts.
By the definition the measures we ensured that on the complement of the interior of $F$ (on the set $\R\backslash(F\backslash \partial F$)) we have the equality of measures
\begin{align*}
\mu_1|\ci{\R\backslash(F\backslash \partial F)} = (\mu_1)\ti{s}|\ci{\R\backslash(F\backslash \partial F)} = \mu|\ci{\R\backslash(F\backslash \partial F)}
\end{align*}
and Lemma \ref{l-dm} implies
\begin{align*}
\mu_2|\ci{\R\backslash(F\backslash \partial F)} \sim (\mu_2)\ti{s}|\ci{\R\backslash(F\backslash \partial F)} \sim \mu|\ci{\R\backslash(F\backslash \partial F)}.
\end{align*}

It remains to check the singular parts on $F\backslash \partial F$.
We begin by recalling that in definition \eqref{d-esupp} the points where the limit-superior is infinite are excluded. So by the definition of $F$ via the intersection of essential supports of the absolutely continuous measures we have that $\mu\ti{s}|\ci {F\backslash \partial F}\equiv 0$. By the definition of $u_2$ on $F\backslash\partial F$, the same is true for $(\mu_2)\ti{s}$. Indeed, for any closed set $X\subset F\backslash \partial F$ there exists an $\e>0$ such that $u_2(x)\in (\e,\pi-\e)$ for all $x\in X$. By equation \eqref{e-arg}, this means that $$\lim_{y\downarrow0} \Im K\nu_2 (x+iy) \neq 0\quad\text{for all }x\in X.$$ In virtue of Theorem \ref{t-AS} (applied to the measures $\mu_\alpha = \mu_2$ and $\mu = \nu_2$) it follows that $(\mu_2)\ti{s}(X)=0$. Whereby the singular parts satisfy the desired property also on $F\backslash \partial F$.
\end{proof}

If the $\{\f_n\}$ form an orthonormal sequence, the following lemma is proved as a corollary to the main theorem in \cite{JakLast2000}. Although, their proof extends immediately to the non-orthogonal case, we decided to include a new shorter proof here.

\begin{lem}\label{l-MSAS}
Assume the hypotheses of Theorem \ref{t-firsttheo} and assume that $\p$ is a product of absolutely continuous measures. Then $(\mu_\omega)\ti s\perp (\mu_\eta)\ti s$ almost surely with respect to the product measure. In particular (with the notation of the proof of Theorem \ref{t-secondtheo}), we have $\mu\ti s\perp \nu\ti s$ almost surely with respect to the product measure. 
\end{lem}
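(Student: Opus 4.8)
The plan is to reduce the statement to the Aronszajn--Donoghue theorem (Theorem \ref{t-AD}) by varying a single coupling coordinate, and then to upgrade ``mutually singular for distinct couplings'' to ``mutually singular almost surely'' by an elementary countability argument combined with Fubini's theorem.

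First I would fix everything except the first coordinate. Writing $\omega=(\omega_1,\omega')$ with $\omega'=(\omega_2,\omega_3,\dots)$, we have $H_\omega = H_{\omega'} + \omega_1(\fdot,\f_1)\f_1$, where $H_{\omega'} = H+\sum_{n\ge2}\omega_n(\fdot,\f_n)\f_n$ is almost surely essentially self-adjoint. For fixed $\omega'$ this is a rank one family in the parameter $\omega_1$, and the spectral measure $\mu_\omega$ of $H_\omega$ with respect to $\f_1$ depends only on the restriction of $H_\omega$ to the cyclic subspace generated by $\f_1$. Hence Theorem \ref{t-AD} applies and tells us that, for distinct values of $\omega_1$, the singular parts $(\mu_\omega)\ti{s}$ are mutually singular.

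The key elementary observation is then the following: if $\{\sigma_t\}_{t\in\R}$ is a family of finite measures that are pairwise mutually singular, then any fixed measure $\rho$ satisfies $\sigma_t\perp\rho$ for all but at most countably many $t$. Indeed, if $\sigma_t\not\perp\rho$ then the part of $\sigma_t$ absolutely continuous with respect to $\rho$ is nonzero; for distinct $t$ these parts are nonzero, mutually singular, and all dominated by $\rho$, and in a $\sigma$-finite measure space there can be only countably many such measures. Applying this with $\rho=(\mu_\eta)\ti{s}$ (a fixed measure once $\eta$ and $\omega'$ are frozen) shows that $(\mu_\omega)\ti{s}\perp(\mu_\eta)\ti{s}$ for all but countably many $\omega_1$. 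Since by hypothesis the distribution $\mu_1$ of $\omega_1$ is absolutely continuous, it assigns zero mass to every countable set, so this holds for $\mu_1$-almost every $\omega_1$. Integrating over the remaining coordinates and over $\eta$ via Fubini yields $(\mu_\omega)\ti{s}\perp(\mu_\eta)\ti{s}$ almost surely with respect to $\p\times\p$.

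The main obstacle, and really the only point requiring care, is that the rank one perturbation $\omega_1(\fdot,\f_1)\f_1$ acts nontrivially only on the cyclic subspace of $\f_1$, so varying $\omega_1$ alone moves only the part of the singular spectrum seen by $\f_1$. To obtain mutual singularity of the full singular spectral types, and in particular the clause $\mu\ti{s}\perp\nu\ti{s}$ for the cyclic operators $(H_\omega)\ti{ess}$ and $(H_\eta)\ti{ess}$, I would run the above argument for each $\f_n$ in turn, using that $\{\f_n\}$ is a cyclic family so that the maximal spectral type is equivalent to $\sum_n 2^{-n}\mu_\omega^{(n)}$, where $\mu_\omega^{(n)}$ denotes the spectral measure with respect to $\f_n$. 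Establishing $(\mu_\omega^{(n)})\ti{s}\perp(\mu_\eta^{(m)})\ti{s}$ for every pair $(n,m)$, a countable family of almost-sure events, each handled by varying $\omega_n$ exactly as above, then gives the result for the full singular parts and hence for $\mu\ti{s}$ and $\nu\ti{s}$.
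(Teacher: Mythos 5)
Your proposal is correct, and although it runs on the same engine as the paper --- the Aronszajn--Donoghue theorem (Theorem \ref{t-AD}) combined with the atomlessness of the coordinate distributions --- it takes a genuinely different, and in fact more rigorous, route. The paper's own proof is a four-line contradiction: if $S=\{(\omega,\eta):(\mu_\omega)\ti{s}\not\perp(\mu_\eta)\ti{s}\}$ had positive $\p\times\p$-measure, then, because $\p$ is a product of absolutely continuous measures, there would exist a pair $(\omega,\eta)\in S$ with $H_\omega$ a rank one perturbation of $H_\eta$, contradicting Theorem \ref{t-AD}. Read literally this is too quick on two counts, both of which your argument repairs. First, pairs $(\omega,\eta)$ differing in exactly one coordinate form a $\p\times\p$-null set, so a positive-measure $S$ need not contain any such pair; what Fubini actually produces, after freezing $\eta$ and the tail $\omega'$, is an uncountable set of values of $\omega_1$ on which non-singularity against the fixed measure $\rho=(\mu_\eta)\ti{s}$ holds --- and since ``not mutually singular with $\rho$'' is not transitive, two such values yield no contradiction. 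Your countability observation (a $\sigma$-finite $\rho$ can dominate at most countably many nonzero pairwise mutually singular measures, seen via the essentially disjoint sets where the Radon--Nikodym derivatives are positive) is exactly the missing step that converts Aronszajn--Donoghue's pairwise statement into an almost-sure one. Second, Theorem \ref{t-AD} compares spectral measures taken with respect to the perturbation vector $\f_n$ only, so passing to the full singular spectral types requires your decomposition of the maximal type as $\sum_n 2^{-n}\mu_\omega^{(n)}$ over the cyclic family, together with the (routine) countable stability of mutual singularity under such sums; the paper's proof is silent on this. Note that this step needs $\{\f_n\}$ to be a cyclic family for $H_\omega$, an assumption the lemma does not state explicitly but without which the conclusion can fail outright, since any singular spectrum of $H$ on the orthogonal complement of the subspace generated by the $\f_n$'s is common to all $H_\omega$ and $H_\eta$ (compare remark (c) following Theorem \ref{t-secondtheo}); the paper implicitly glosses this as well. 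In short: the paper's version buys brevity and conveys the idea, while your direct conditioning-plus-countability argument is the complete proof that the paper's sketch tacitly presupposes.
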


\begin{proof}
Assume that the set $S=\{(\omega,\eta): (\mu_\omega)\ti s\not\perp (\mu_\eta)\ti s \}$ has positive product measure. Because $\p$ is assumed to be a product of absolutely continuous measures, there then exists a pair $(\omega, \eta)\in S$ such that $H_\omega$ is a rank one perturbation of $H_\eta$. But by Aronszajn--Donoghue theory, see Theorem \ref{t-AD}, this is not possible.
\end{proof}

{\bf Acknowledgments.} The author would like to thank Alexei Poltoratski for suggesting the problems which led to this paper as well as for the many insightful discussions and comments along the way. Further thanks to the referees.

\bibliographystyle{amsplain}

\end{document}